\newtheorem{example}{\upshape Example}
\newtheorem{assumption}{\upshape Assumption}
\newtheorem{remark}{\upshape Remark}
\newtheorem{property}{\upshape Property}
\newtheorem{theorem}{\upshape Theorem}
\newtheorem{lemma}{\upshape Lemma}
\newtheorem{corollary}{\upshape Corollary}
\crefname{example}{example}{examples}
\crefname{assumption}{assumption}{assumptions}
\crefname{remark}{remark}{remarks}
\crefname{property}{property}{properties}
\crefname{theorem}{theorem}{theorems}
\crefname{lemma}{lemma}{lemmas}
\crefname{corollary}{corollary}{corollaries}
\DeclarePairedDelimiter\normZZZZ{\lVert}{\rVert}
\DeclarePairedDelimiter\absZZZZ{\lvert}{\rvert}
\DeclareSymbolFont{bbold}{U}{bbold}{m}{n}
\DeclareSymbolFontAlphabet{\mathbbold}{bbold}
\newcommand{\floor}[1]{\left\lfloor #1 \right\rfloor} 
\begin{document}

\title{Single-ensemble multilevel Monte Carlo for discrete ensemble Kalman methods}
\author{Arne Bouillon\thanks{NUMA research group, Department of Computer Science, KU Leuven, Leuven, Belgium (\href{mailto:arne.bouillon@kuleuven.be}{arne.bouillon@kuleuven.be}, \href{mailto:toon.ingelaere@kuleuven.be}{toon.ingelaere@kuleuven.be}, \href{mailto:giovanni.samaey@kuleuven.be}{giovanni.samaey@kuleuven.be})} \and Toon Ingelaere\footnotemark[1] \and Giovanni Samaey\footnotemark[1]}
\maketitle

\begin{abstract}
    Ensemble Kalman methods solve problems in domains such as filtering and inverse problems with interacting particles that evolve over time. For computationally expensive problems, the cost of attaining a high accuracy quickly becomes prohibitive. We exploit a hierarchy of approximations to the underlying forward model and apply multilevel Monte Carlo (MLMC) techniques, improving the asymptotic cost-to-error relation. More specifically, we use MLMC \emph{at each time step} to estimate the interaction term in a single, globally-coupled ensemble. This technique was proposed by Hoel et al.\ for the ensemble Kalman filter; our goal is to study its applicability to a broader family of ensemble Kalman~methods.

    \textbf{Keywords: }Multilevel Monte Carlo $\cdot$ Ensemble Kalman $\cdot$ Bayesian inversion
\end{abstract}

\section{Introduction} \label{sec:intro}
    This paper studies ensemble Kalman methods, algorithms that solve various problems with an evolving \emph{ensemble} of interacting \emph{particles} in state or parameter space. These have been particularly successful in the contexts of filtering \cite{andersonEnsembleAdjustmentKalman2001,bergemannEnsembleKalmanBucyFilter2012,bishopAdaptiveSamplingEnsemble2001,evensenSequentialDataAssimilation1994a}, optimization \cite{iglesiasEnsembleKalmanMethods2013a,schillingsAnalysisEnsembleKalman2017b}, rare-event estimation \cite{wagnerEnsembleKalmanFilter2022a}, and Bayesian-posterior sampling \cite{garbuno-inigoInteractingLangevinDiffusions2020a,iglesiasIterativeRegularizationEnsemble2015}. Some example methods are introduced in \cref{sec:intro:ipm}. With a finite number of particles, they can be viewed as Monte Carlo approximations to some mean-field model.

    Our work compares this straightforward approximation to a newly proposed generalization of the multilevel Monte Carlo (MLMC) scheme from \cite{chernovMultilevelEnsembleKalman2021a,hoelMultilevelEnsembleKalman2016a}, where pairs of particles follow different but still globally-coupled dynamics.

    \subsection{Ensemble Kalman methods} \label{sec:intro:ipm}
        This subsection discusses the use of ensemble Kalman methods for filtering, as well as optimization and sampling in Bayesian inverse problems. Filtering is concerned with reconstructing state variables from noisy observations. Consider the discrete dynamics
        \begin{equation} \label{eq:intro:ipm:filter-dyn}
            u_{n+1} = \mathcal G(u_n), \qquad y_{n+1} = Hu_{n+1} + \eta_{n+1}, \qquad 0 \le n < N,\\
        \end{equation}
        where $u_n\in\mathbb R^{d_u}$ denotes the state at time step $n$, $\mathcal G$ is the stochastic \emph{forward model}, $H$ is a linear observation map, and $\eta_n$ is a noise term. One popular algorithm to estimate the states $\{u_n\}_{n=1}^N$ from noisy observations $\{y_n\}_{n=1}^N$ is the ensemble Kalman filter (EnKF).

        \begin{example}[Ensemble Kalman filter] \label{ex:intro:ipm:enkf}
            The EnKF \cite{evensenSequentialDataAssimilation1994a} is an ensemble Kalman method whose ensemble $\bm u_n = \{u_n^j\}_{j=1}^J$ at time $n$ estimates the expectation and uncertainty on $u_n$. It assumes that $\eta_n\sim\mathcal N(0, \Gamma)$ with positive definite $\Gamma$. A particle $u_n^j$ follows
            \begin{equation} \label{eq:ex:intro:ipm:enkf}
                u_{n+1}^j = (I - K^{\mathcal G}(\bm u_n)H)\mathcal G(u_n^j) + K^{\mathcal G}(\bm u_n)(y_{n+1} + \sqrt\Gamma\xi_n^j),
            \end{equation}
            where $K^{\mathcal G}(\bm u_n) = C(\mathcal G(\bm u_n))H^{\mathstrut\scriptstyle{\top}}(HC(\mathcal G(\bm u_n))H^{\mathstrut\scriptstyle{\top}} + \Gamma)^{-1}$ (with $C(\cdot)$ the sample covariance) is called the \emph{Kalman gain}, and where $\xi_n^j\sim\mathcal N(0, I)$.
        \end{example}
        \begin{example}[Deterministic ensemble Kalman filter] \label{ex:intro:ipm:denkf}
            The deterministic ensemble Kalman filter (DEnKF) is proposed in \cite{sakovDeterministicFormulationEnsemble2008} as an alternative to the EnKF. It uses the dynamics
            \begin{equation} \label{eq:ex:intro:ipm:denkf}
                u_{n+1}^j = (I - K^{\mathcal G}(\bm u_n)H)\mathcal G(u_n^j) + K^{\mathcal G}(\bm u_n)(y_{n+1} + H/2(\mathcal G(u_n^j) - E(\mathcal G(\bm u_n)))),
            \end{equation}
            where $K^{\mathcal G}(\bm u_n)$ is the Kalman gain in \cref{ex:intro:ipm:enkf}. $E(\cdot)$ denotes the sample mean.
        \end{example}

        A second problem class is that of \emph{Bayesian inverse problems}. Here we assume to have an unknown parameter $u\in\mathbb R^{d_u}$ with prior distribution $\pi_\mathrm{prior}(u)$, a deterministic forward map $\mathcal G\colon \mathbb R^{d_u}\rightarrow\mathbb R^{d_g}$, and an observation
        \begin{equation}
            y = \mathcal G(u) + \eta,
        \end{equation}
        in which $\eta\in\mathbb R^{d_g}$ follows a known noise model $\pi_\eta$. We can then define the \emph{likelihood} $\pi_\mathrm{li}(y\mid u) \coloneqq \pi_\eta(y - \mathcal G(u))$. Bayes' formula results in the posterior distribution
        \begin{equation} \label{eq:intro:ipm:posterior}
            \pi_\mathrm{post}(u \mid y) \propto \pi_\mathrm{li}(y \mid u)\pi_\mathrm{prior}(u),
        \end{equation}
        of the unknown parameter $u$. Computing the normalization constant is usually intractable, as it involves integration over the entire parameter domain.

        The posterior distribution is mainly used in two ways. Optimization methods can target the \emph{maximum a posteriori} (MAP) parameter, the most likely $u$ given~$y$ and $\pi_\mathrm{prior}$. Sampling methods give a more complete view of the posterior and its features by sampling from it. Ensemble Kalman inversion (EKI) \cite{iglesiasEnsembleKalmanMethods2013a} and ensemble Kalman sampling (EKS) \cite{garbuno-inigoInteractingLangevinDiffusions2020a} perform these respective tasks and are both inspired by the EnKF.
        \begin{example}[Ensemble Kalman inversion] \label{ex:intro:ipm:eki}
            We assume that $\eta\sim\mathcal N(0, \Gamma)$ and that $u$ has a uniform prior. (General noise distributions are handled in~\cite{duffieldEnsembleKalmanInversion2022a}; prior regularization is discussed in, e.g., \cite{huangIteratedKalmanMethodology2022b}.) EKI was proposed as the iterated application of the EnKF (creating an artificial discrete time dimension) in \cite{iglesiasEnsembleKalmanMethods2013a}, to which time steps $\tau_n$ were added in \cite{schillingsAnalysisEnsembleKalman2017b}. With $\xi_n^j\sim\mathcal N(0, I)$, the resulting dynamics~are
            \begin{equation} \label{eq:ex:intro:ipm:eki}
                u_{n+1}^j = u_n^j + \tau_n C(\bm u_n, \mathcal G(\bm u_n))(\tau_n C(\mathcal G(\bm u_n)) + \Gamma)^{-1}(y - \mathcal G(u_n^j) + \sqrt{\Gamma / \tau_n}\kern2pt\xi_n^j),
            \end{equation}
            again with sample (cross-)covariance $C(\cdot)$. A continuous-time limit was studied in \cite{schillingsAnalysisEnsembleKalman2017b} and rediscretized in a slightly different form in, e.g., \cite{kovachkiEnsembleKalmanInversion2019a}. In that work, the (artificial) time steps $\tau_n$ are also determined adaptively.
        \end{example}
        \begin{example}[Ensemble Kalman sampling] \label{ex:intro:ipm:eks}
            Now assume that $\eta\sim\mathcal N(0, \Gamma)$ and that $\pi_\mathrm{prior}$ is a zero-centered Gaussian with covariance $\Gamma_0$. EKS was proposed and motivated in continuous-time form in \cite{garbuno-inigoInteractingLangevinDiffusions2020a}. In practice, a discretization should be used, such as
            \begin{equation} \label{eq:ex:intro:ipm:eks}
                u_{n+1}^{j} = u_n^j + \tau_nC(\bm u_n, \mathcal G(\bm u_n))\Gamma^{-1}(y - \mathcal G(u_n^j)) - \tau_nC(\bm u_n)\Gamma_0^{-1}u_{n+1}^{j} + \sqrt{2\tau_nC(\bm u_n)}\kern2pt\xi_n^j\\
            \end{equation}
            with $\xi_n^j\sim\mathcal N(0, I)$. These dynamics estimate (\ref{eq:intro:ipm:posterior}), based on linear ansatzes, as $n\rightarrow\infty$.
        \end{example}

        \paragraph{Advantages of ensemble Kalman methods.} Many of these methods require no derivatives of the forward model; instead of gradient information, interaction between the ensemble members drives the particle evolution. This is crucial when gradients are expensive, unavailable, or undefined due to a non-differentiable objective \cite{kovachkiEnsembleKalmanInversion2019a}, or when they are noisy or highly oscillatory \cite{dunbarEnsembleInferenceMethods2022a}. In addition, these methods allow for straightforward parallelization, as only the interaction term requires information from multiple particles.

    \subsection{Multilevel Monte Carlo} \label{sec:intro:mlmc}
        To simulate ensemble Kalman methods with expensive models more efficiently, we will use multilevel Monte Carlo (MLMC) \cite{gilesMultilevelMonteCarlo2008b}. The core MLMC idea is as follows. An expectation $\mathbb E[x_L]$ of an expensive random variable $x_L$, to which a hierarchy of cheaper, less accurate approximations $\{x_\ell\}_{\ell=0}^{L-1}$ is available, is rewritten with a telescoping sum:
        \begin{equation} \label{eq:intro:mlmc:mlmc}
            \mathbb E[x_L] = \mathbb E[x_0] + \medmath{\sum\nolimits}_{\ell=1}^L\mathbb E[x_\ell - x_{\ell-1}].
        \end{equation}
        MLMC samples many cheap realizations of $x_0$, giving an accurate estimate of $\mathbb E[x_0]$. Each difference term is then estimated by sampling \emph{correlated} realizations of $x_\ell$~and~$x_{\ell-1}$. This correlation reduces the variance of the estimators, so fewer samples are needed. The challenge in designing MLMC algorithms is to find a way to correlate these realizations.

    \subsection{Related work and objectives} \label{sec:intro:obj}
        Multilevel methods for filtering \cite{gregoryMultilevelEnsembleTransform2016,jasraMultilevelParticleFilters2017} and Bayesian inversion \cite{dodwellMultilevelMarkovChain2019f} are an active research topic. Within ensemble Kalman methods, a multilevel EnKF was proposed in \cite{hoelMultilevelEnsembleKalman2016a} and extended to spatio-temporal processes in \cite{chernovMultilevelEnsembleKalman2021a}. A variant for reservoir history matching is given in \cite{fossumAssessmentMultilevelEnsemblebased2020} and a multifidelity EnKF in \cite{popovMultifidelityEnsembleKalman2021}. We will refer to these algorithms as \emph{single-ensemble} MLMC, as they use \emph{a sole ensemble} of pairwise-correlated particles -- with fewer particle pairs on higher levels -- that \emph{interact globally}.

        An alternative approach is developed in \cite{hoelMultilevelEnsembleKalman2020b} and given a multi-index extension~in~\cite{hoelMultiindexEnsembleKalman2022}. They use many small, inaccurate ensembles together with fewer large, accurate ones. All ensembles evolve independently; we will call these methods \emph{multiple-ensemble} MLMC.

        These particle systems are closely related to McKean--Vlasov SDEs, whose~evolution depends on the law of the solution. In this context, many multilevel ideas~are found in the literature \cite{chadaMultilevelEnsembleKalman2022b,haji-aliMultilevelMultiindexMonte2018a,ricketsonMultilevelMonteCarlo2015,szpruchIterativeMultilevelParticle2019a} and inspired the multilevel methods above. Of these,~\cite{ricketsonMultilevelMonteCarlo2015} comes closest to the single-ensemble approach, but uses less coupling between levels~and focuses specifically on the expectation of a function over the particles as interaction.

        There are, however, key differences between the general McKean--Vlasov case and ensemble Kalman methods. McKean--Vlasov MLMC techniques often vary the time step used between levels, while ensemble Kalman methods either do not have time steps or, adaptively \cite{kovachkiEnsembleKalmanInversion2019a}, tend to choose the largest time step that does not cause instabilities. In addition, the interaction terms in ensemble Kalman methods are typically means or covariances, which have cost $\mathcal O(J)$ with $J$ particles instead of the $\mathcal O(J^2)$ in many other particle systems \cite{carrilloConsensusBasedSampling2022a,pinnauConsensusbasedModelGlobal2017}. Both of these properties support a single-ensemble approach: particles on all levels are defined at each time step and global interaction is cheap.

        In \cite{hoelMultilevelEnsembleKalman2020b}, the multiple-ensemble multilevel EnKF is compared to the single-ensemble one \pagebreak{}from \cite{hoelMultilevelEnsembleKalman2016a} for some test problems. This shows the latter approach consistently outperforming the former by a constant factor. Nevertheless, single-ensemble multilevel ensemble Kalman methods remain restricted to the EnKF. Our goal, then, is twofold:~(i)~formulate a framework for ensemble Kalman methods with a single-ensemble multilevel simulation algorithm, and (ii) analyze the rate at which single- and multilevel simulation algorithms converge to the mean-field model when more particles are added.

    \subsection{Overview of the paper} \label{sec:intro:overview}
        After \cref{sec:notation} introduces our notation, we formulate a general framework for MLMC ensemble Kalman methods in \cref{sec:frame}. \Cref{sec:conv} studies the asymptotic cost-to-error relation of this technique, with proofs deferred to \cref{sec:proof-sl,sec:proof-ml}. The performance~of~our algorithm is studied numerically in \cref{sec:scale}, after which \cref{sec:concl} concludes the paper.

\section{Notation and prerequisites} \label{sec:notation}
        Let $(\Omega, \mathcal F, \mathbb P)$ be a complete probability space. For any $d\in\mathbb N$ and $p\ge2$, the $p$-norm of a random variable~(RV) ${u\colon\Omega\rightarrow\mathbb R^d}$ is defined as
        \begin{equation}
            \normZZZZ{u}_p \coloneqq \mathbb E[\absZZZZ u^p]^{1/p}.
        \end{equation}
        For a scalar $u$, $\absZZZZ u$ denotes the absolute value; for a vector or matrix $u$, it denotes the 2-norm; and for a tuple $u$, it denotes the sum of the element norms. We introduce the space $L^p(\Omega, \mathbb R^d) \coloneqq \{u\colon \Omega\rightarrow\mathbb R^d \mid \normZZZZ{u}_p < \infty\}$. We will also use the shorthand notation $L^{\ge2}(\Omega, \mathbb R^d) \coloneqq \bigcap_{p\ge2}L^p(\Omega, \mathbb R^d)$. The following properties will prove useful.

        \begin{property}[Generalized H\"older's inequality] \label{pr:intro:notation:holder}
            For any RVs $(u, v)$ and $p\ge2$, we have $\normZZZZ{uv}_p \le \normZZZZ{u}_q\normZZZZ{v}_r$ if $1/p = 1/q + 1/r$. In particular, $\normZZZZ{uv}_p \le \normZZZZ{u}_{2p}\normZZZZ{v}_{2p}$.
        \end{property}

        \begin{property}[Norm ordering] \label{pr:intro:notation:ord}
            For any RV $u$ and $p\ge2$, we have ${\absZZZZ{\mathbb E[u]} \le \mathbb E[\absZZZZ u] \le \normZZZZ{u}_p}$.
        \end{property}

        \begin{property}[Monotonicity of the $p$-norm] \label{pr:intro:notation:mono}
            For any RVs $(u, v)$ and $p\ge2$, if it holds that ${\absZZZZ{u(\omega)} \le \absZZZZ{v(\omega)}}$ for all $\omega\in\Omega$, then $\normZZZZ u_p \le \normZZZZ v_p$.
        \end{property}

        \begin{property}[Marcinkiewicz--Zygmund inequality] \label{pr:intro:notation:mz}
            Let $u$ and $u^1, \ldots, u^J$ be zero-mean i.i.d.\ RVs such that $\normZZZZ{u}_p<\infty$ for all $p\ge2$. Then, for any $p\ge2$, there exists a constant $c_{\kern-1ptp}$ such that $\normZZZZ{\frac1J\sum_{j=1}^J u^j}_p \le c_{\kern-1ptp}J^{-1/2}\normZZZZ{u}_p$. (See, e.g., \cite[Corollary 8.2]{gutProbabilityGraduateCourse2013}.)
        \end{property}

        We write $A\succ0$ (or $A\succeq0$) to indicate that a matrix $A$ is positive (semi-)definite. The expressions $A\succ B$ and $A\succeq B$ mean $A-B\succ0$ and $A-B\succeq0$, respectively. The notation $f(x) \lesssim g(x)$ will denote that there exists a constant $c$ such that $f(x) \le cg(x)$ for all $x$. We further write $f(x) \eqsim g(x)$ to mean ${f(x) \lesssim g(x) \lesssim f(x)}$.

\section{Presentation of the framework} \label{sec:frame}
    We now present our framework for ensemble Kalman methods. For many practical problems, the model $\mathcal G$ is computationally intractable. Instead, a hierarchy of approximations $\{\mathcal G_\ell\}_{\ell=0}^\infty$ is available, where a higher $\ell$ offers a better approximation. We work in this context. First, \cref{sec:frame:sl-mf} identifies a common structure to the methods introduced so far that approximates underlying \emph{mean-field dynamics}. \Cref{sec:frame:ml} then proposes a multilevel simulation algorithm that approximates the mean-field model with the hierarchy~$\{\mathcal G_\ell\}_\ell$.

    \subsection{Single-level simulation algorithm} \label{sec:frame:sl-mf}
        We now discuss how the dynamics in \cref{ex:intro:ipm:enkf,ex:intro:ipm:denkf,ex:intro:ipm:eki,ex:intro:ipm:eks} can be interpreted as particle discretisations of a mean-field discrete-time McKean--Vlasov-type equation, with initial condition $u_0\in L^{\ge2}(\Omega, \mathbb R^{d_u})$. A mean-field particle taking $N$ time steps is a realization of the correlated random variables $\{\bar u_n\colon\Omega\rightarrow\mathbb R^{d_u}\}_{n=0}^N$. The particle evolves over time~as
        \begin{equation} \label{eq:frame:def:mf}
                \bar u_{n+1}(\omega) = \Psi_n^{\mathcal G(\cdot, \omega)}(\bar u_n(\omega), \Theta^{\mathcal G}[\bar u_n], \xi_n(\omega)),
        \end{equation}
        where $\xi_n\sim\mathcal N(0, I)$ and $\Theta^g[u] = (\Theta_1^g[u], ..., \Theta_M^g[u])$ contains $M$ statistical parameters of a random variable $u$ and may involve a forward model $g$. Usually one is interested in a quantity of interest (QoI) $\bar\theta^\dagger_N \coloneq \Theta^\dagger[\bar u_N]$, some parameter of the distribution at time~$N$.

        We estimate this distribution of $\bar u_N$ through $J$ approximate samples from \cref{eq:frame:def:mf}. Let $\omega^j\in\Omega$ for $1\le j\le J$ and consider the level-$L$ and $J$-particle ensemble $\bm u_n^L = \{u_n^{L,j}\}_{j=1}^J$:
        \begin{equation} \label{eq:frame:def:sl}
            u_{n+1}^{L,j} = \Psi_n^{\mathcal G_L}(u_n^{L,j}, \widehat\Theta^{\mathcal G_L}(\bm u_n^L), \xi_n^j), \qquad 1\le j\le J.
        \end{equation}
        The \emph{sample statistic} $\widehat \Theta^g(\bm u)$ estimates $\Theta^g[u]$ with an ensemble $\bm u$, distributed as $u$. In \cref{eq:frame:def:sl}, we defined $\xi_n^j\coloneqq\xi_n(\omega^j)$. Note also that when a forward model $g$ is stochastic, $g(u_n^{L,j})$ should be interpreted as $g(u_n^{L,j}, \omega^j)$ in the computation of $\Psi_n^g$ and $\widehat\Theta^g$. We refer to \cref{eq:frame:def:sl} as the \emph{single-level} simulation algorithm, as it employs a single approximation from the hierarchy $\{\mathcal G_\ell\}_\ell$. The QoI $\bar\theta^\dagger_N$ is estimated via an estimator $\hat \theta^{\dagger,L}_N \coloneq \widehat \Theta^\dagger(\bm u^L_N)$.

        \begin{remark}
            The dynamics in \cref{ex:intro:ipm:enkf,ex:intro:ipm:denkf,ex:intro:ipm:eki,ex:intro:ipm:eks} fit this framework with sample means $E(\cdot)$ and sample covariances $C(\cdot)$ as statistics. Convergence to their mean-field limit as ${J\rightarrow\infty}$, in either discrete-time or continuous-time form, is studied in e.g. \cite{dingEnsembleKalmanSampler2021a,garbuno-inigoInteractingLangevinDiffusions2020a,mandelConvergenceEnsembleKalman2011b,schillingsAnalysisEnsembleKalman2017b}.
        \end{remark}

    \subsection{Multilevel simulation algorithm} \label{sec:frame:ml}
        \Cref{eq:frame:def:sl} is a straightforward Monte Carlo approximation to \cref{eq:frame:def:mf} with fixed accuracy level $L$ and ensemble size $J$. In contrast, the single-ensemble multilevel Monte Carlo approach mixes particles on different levels $0\le\ell\le L$ with a multilevel ensemble that consists of subensembles: $\bm u_n^{\mathrm{ML}} = (\bm u_n^{0,{\mathrm F}}, (\bm u_n^{1,{\mathrm F}},\bm u_n^{1,{\mathrm C}}), \ldots, (\bm u_n^{L,{\mathrm F}},\bm u_n^{L,{\mathrm C}}))$. This approach follows \cite{chernovMultilevelEnsembleKalman2021a,hoelMultilevelEnsembleKalman2016a}; our description generalizes it to the framework in \cref{sec:frame:sl-mf}.

        Consider $\omega^{\ell,j}\in\Omega$ for $0\le\ell\le L$, $1\le j\le J_\ell$. The multilevel ensemble~evolves~as
        \begin{equation} \label{eq:frame:def:ml-evol}
            \begin{aligned}
                    u_{n+1}^{\ell,{\mathrm F},j} &= \Psi_n^{\mathcal G_\ell}(u_n^{\ell,{\mathrm F},j}, \widehat\Theta^{\mathrm{ML}}(\bm u_n^{\mathrm{ML}}), \xi_n^{\ell,j}), &\qquad 1 \le j \le J_\ell, &\quad 0 \le \ell \le L,\\
                    u_{n+1}^{\ell,{\mathrm C},j} &= \Psi_n^{\mathcal G_{\ell-1}}(u_n^{\ell,{\mathrm C},j}, \widehat\Theta^{\mathrm{ML}}(\bm u_n^{\mathrm{ML}}), \xi_n^{\ell,j}), &\qquad 1 \le j \le J_\ell, &\quad 1 \le \ell \le L,
            \end{aligned}
        \end{equation}
        where $\xi_n^{\ell,j}\coloneqq\xi_n(\omega^{\ell,j})$ and, analogously to \cref{eq:intro:mlmc:mlmc}, the multilevel sample statistic
        \begin{equation} \label{eq:frame:def:thetaml}
            \widehat\Theta^{\mathrm{ML}}(\bm u_n^{\mathrm{ML}}) \coloneqq \widehat\Theta^{\mathcal G_0}(\bm u_n^{0,{\mathrm F}}) + \sum\nolimits_{\ell=1}^L\Bigl(\widehat\Theta^{\mathcal G_\ell}(\bm u_n^{\ell,{\mathrm F}}) - \widehat\Theta^{\mathcal G_{\ell-1}}(\bm u_n^{\ell,{\mathrm C}})\Bigr)
        \end{equation}
        estimates $\Theta^{\mathcal G_L}$. Similarly to before, if $g$ is stochastic, $g(u_n^{\ell, \{F,C\},j})$ should be interpreted as $g(u_n^{\ell, \{F,C\},j}, \omega^{\ell,j})$ in the computation of $\Psi^g$ and $\widehat \Theta^g$. The fine-coarse particle pairs are correlated by setting $u_0^{\ell,{\mathrm F},j}=u_0^{\ell,{\mathrm C},j}$ and using the shared $\omega^{\ell,j}$. The QoI $\bar\theta^\dagger_N$ is estimated by a multilevel estimator ${\hat \theta^{\dagger,{\mathrm{ML}}}_N \coloneq \widehat \Theta^{\dagger,{\mathrm{ML}}}(\bm u_N^{\mathrm{ML}})}$ that is analogous to \cref{eq:frame:def:thetaml}. Note that the multilevel estimator (\ref{eq:frame:def:thetaml}) may not preserve properties of $\Theta$ such as definiteness. Dynamics can be adapted to deal with this complication; see \cref{rem:apdx-proofs:ipm:cov}.

\section{Theoretical properties and convergence} \label{sec:conv}
    \Cref{sec:conv:ass} formulates assumptions on the ingredients of the single- and multilevel framework outlined in \cref{sec:frame}. Under these assumptions, \cref{sec:conv:rates} gives convergence rates to the mean-field model for both simulation algorithms.

    \subsection{Assumptions} \label{sec:conv:ass}
        We formulate assumptions on the following ingredients of the framework: (i) the approximations $\mathcal G_\ell$ to the exact forward model $\mathcal G$, (ii) the functions $\Psi_n^g$ defined in \cref{sec:frame}, and (iii) the parameter $\Theta^g$ and its estimator $\widehat\Theta^g$. These assumptions are \emph{local}, and hence contain \emph{locality conditions} such as $\normZZZZ{u_1-u_0}_r\le d$.
        \begin{assumption} \label{ass:frame:def:G}
            There exist constants $\beta$ and $\gamma$ such that, for any $p\ge2$ and $u_0\in L^{\ge2}(\Omega, \mathbb R^{d_u})$, there exist constants $d, c_{g,\{1,2,3, 4\}}>0$ and $r\ge2$ such that for any $u_{\{1,2\}}\in L^{\ge2}(\Omega, \mathbb R^{d_u})$ with $\normZZZZ{u_{\{1,2\}}-u_0}_r\le d$, the following hold for $\ell\ge0$.
            \begin{enumerate}[label={(\roman*{})}]
                \item The models $\mathcal G_\ell$ satisfy a Lipschitz bound: ${\normZZZZ{\mathcal G_\ell(u_1) - \mathcal G_\ell(u_2)}_p \le c_{g,1}\normZZZZ{u_1-u_2}_r}$.
                \item All $\mathcal G_\ell$ are bounded: $\normZZZZ{\mathcal G_\ell(u_1)}_p \le c_{g,2}$.
                \item The rate of approximation to $\mathcal G$ is described by $\beta$: ${\normZZZZ{\mathcal G_\ell(u_1) - \mathcal G(u_1)}_p \le c_{g,3} 2^{-\beta \ell/2}}$.
                \item The rate at which $\mathcal G_\ell$ increases in cost is described by $\gamma$: $\mathrm{Cost}(\mathcal G_\ell) \le c_{g,4}2^{\gamma \ell}$.
            \end{enumerate}
        \end{assumption}
        
        \begin{assumption} \label{ass:frame:def:psi}
            For any $u_0\in L^{\ge2}(\Omega, \mathbb R^{d_u})$, $\theta_0\in L^{\ge2}(\Omega, \mathbb R^{d_\theta})$, and $p\ge2$, and with $\xi\sim\mathcal N(0, I)$, there exist constants $d,c_{\psi}>0$ and $r\ge2$ such that, for any $u_{\{1,2\}}\in L^{\ge2}(\Omega, \mathbb R^{d_u})$,  $\theta_1\in\mathbb{R}^{d_\theta}$, and $\theta_2\in L^{\ge2}(\Omega, \mathbb R^{d_\theta})$, then if $\normZZZZ{u_{\{1,2\}}-u_0}_r\le d$ and $\normZZZZ{\theta_{\{1,2\}}-\theta_0}_r\le d$, all functions $\Psi_n^g$ satisfy a local Lipschitz bound:
            \begin{equation*}
            \begin{aligned}
                &\normZZZZ{\Psi_n^{g_1}(u_1, \theta_1, \xi) - \Psi_n^{g_2}(u_2, \theta_2, \xi)}_p\\ 
                &\qquad \le c_{\psi}(\normZZZZ{u_1-u_2}_r + \normZZZZ{\theta_1-\theta_2}_r + \normZZZZ{g_1(u_1)-g_2(u_2)}_r).
            \end{aligned}
            \end{equation*}
        \end{assumption}
        
        \begin{assumption} \label{ass:frame:def:theta}
            For any $u_0\in L^{\ge2}(\Omega, \mathbb R^{d_u})$ and $p\ge2$, there exist constants $r\ge2$ and $d,c_{\theta,\{1,2,3,4\}}>0$ such that, for any $u_{\{1,2\}}\in L^{\ge2}(\Omega, \mathbb R^{d_u})$ with $\normZZZZ{u_{\{1,2\}}-u_0}_r\le d$, the following hold (with $\bm u_{\{1,2\}}$ an ensemble of $J$ particles distributed as $u_{\{1,2\}}$).
            \begin{subequations}
            \begin{enumerate}[label={(\roman*{})}]
                \item The statistic $\widehat\Theta^g$ satisfies a local Lipschitz bound:
                \begin{equation*}
                    \normZZZZ{\widehat\Theta^{g_1}(\bm u_1) - \widehat\Theta^{g_2}(\bm u_2)}_p \le c_{\theta,1}(\normZZZZ{u_1-u_2}_r + \normZZZZ{g_1(u_1)-g_2(u_2)}_r).
                \end{equation*}
                \item With $\bm u_i$ i.i.d., a difference in $\Theta^g$ is estimated by a difference in $\widehat\Theta^g$ with error
                \begin{align*}
                    &\normZZZZ{(\widehat\Theta^{g_1}(\bm u_1) - \widehat\Theta^{g_2}(\bm u_2)) - (\Theta^{g_1}[u_1] - \Theta^{g_2}[u_2])}_p\\&\qquad \le c_{\theta,2}J^{-1/2}(\normZZZZ{u_1-u_2}_r + \normZZZZ{g_1(u_1)-g_2(u_2)}_r).
                \end{align*}
                \item With $\bm u_i$ i.i.d., $\Theta^g$ is estimated by $\widehat\Theta^g$ with error ${\normZZZZ{\widehat\Theta^g(\bm u_1) - \Theta^g[u_1]}_p \le c_{\theta,3}J^{-1/2}}$.
                \item The parameter and statistic are bounded: $\absZZZZ{\Theta^g[u_1]} \le c_{\theta,4}$ and $\normZZZZ{\widehat\Theta^g(\bm u_1)}_p \le c_{\theta,4}$.
            \end{enumerate}
            \end{subequations}
            The properties formulated here for $\Theta^g$ and $\widehat\Theta^g$ must be satisfied by $\Theta^\dagger$ and $\widehat\Theta^\dagger$ as well.
        \end{assumption}
        \begin{remark} \label{rem:frame:def:theta}
            By setting $J=1$ in \cref{ass:frame:def:theta}(i--ii), another property emerges:
            \begin{equation}
                \absZZZZ{\Theta^{g_1}[u_1] - \Theta^{g_2}[u_2]} \le (c_{\theta,1}+c_{\theta,2})(\normZZZZ{u_1-u_2}_r + \normZZZZ{g_1(u_1)-g_2(u_2)}_r).
            \end{equation}
        \end{remark}
        \Cref{ass:frame:def:G} pertains to $\mathcal G$ and hence must be checked on a case-by-case basis. \Cref{ass:frame:def:psi,ass:frame:def:theta} are discussed for the dynamics used in this paper in \cref{sec:apdx-proofs}.

    \subsection{Convergence rates} \label{sec:conv:rates}
        Our main theorem bounds the asymptotic cost-to-error relation of the multilevel algorithm from \cref{sec:frame:ml}, when the number of levels and of particles on each level are chosen in a specified way. It generalizes \cite[Theorem 3.2]{hoelMultilevelEnsembleKalman2016a} from the EnKF case to our framework. We then give a slower single-level convergence rate for comparison.
        \begin{theorem} \label{thm:frame:ml}
            Let $\epsilon>0$. If \cref{ass:frame:def:G,ass:frame:def:psi,ass:frame:def:theta} are satisfied and
            \begin{equation} \label{eq:thm:frame:ml:choices}
                L = \floor{2\log_2(\epsilon^{-1})/\beta} \qquad \text{and} \qquad J_\ell \eqsim 2^{-\frac{\beta+2\gamma}3\ell}\left\{\rule{0cm}{0.7cm}\right.\begin{array}{ll}
                    2^{\beta L} & \text{if $\beta > \gamma$},\\
                    L^2 2^{\beta L} & \text{if $\beta = \gamma$},\\
                    2^{\frac{\beta+2\gamma}3L} & \text{if $\beta < \gamma$},
                \end{array}
            \end{equation}
            then for every $p\ge2$, there exists an $\epsilon_0>0$ such that
            \begin{equation} \label{eq:thm:frame:ml:res}
                \normZZZZ{\hat \theta^{\dagger,{\mathrm{ML}}}_N - \bar \theta^\dagger_N}_p \lesssim \epsilon\log_2(\epsilon^{-1})^N \qquad \text{when} \qquad \epsilon\le\epsilon_0
            \end{equation}
            with the multilevel simulation algorithm from \cref{sec:frame:ml}, for a cost
            \begin{equation} \label{eq:thm:frame:ml:cost}
                \mathrm{Cost} \eqsim \left\{\rule{0cm}{0.7cm}\right.\begin{array}{ll}
                    \epsilon^{-2} & \text{if $\beta > \gamma$},\\
                    \epsilon^{-(2+\delta)} & \text{if $\beta = \gamma$},\\
                    \epsilon^{-2\gamma/\beta} & \text{if $\beta < \gamma$},
                \end{array} \qquad \text{for any $\delta > 0$.}
            \end{equation}
        \end{theorem}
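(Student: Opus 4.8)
The plan is to control the error by propagating it inductively across the $N$ time steps, comparing the multilevel ensemble of \cref{eq:frame:def:ml-evol} against a coupled family of \emph{mean-field reference particles}. For each index $(\ell,\mathrm F,j)$ and $(\ell,\mathrm C,j)$ I would introduce reference particles $\bar u_n^{\ell,\mathrm F,j},\bar u_n^{\ell,\mathrm C,j}$ solving the exact mean-field recursion \cref{eq:frame:def:mf} driven by the \emph{same} noise $\xi_n^{\ell,j}$ and started from the same state, so that all pairwise errors vanish at $n=0$. Writing $\mathcal E_n$ for the worst-case (over levels and particles) $p$-norm error $\normZZZZ{u_n^{\ell,\bullet,j}-\bar u_n^{\ell,\bullet,j}}_p$, the final QoI error $\normZZZZ{\hat\theta^{\dagger,\mathrm{ML}}_N-\bar\theta^\dagger_N}_p$ reduces to controlling $\mathcal E_N$ together with a terminal error of the statistic $\Theta^\dagger$ read off at time $N$.

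The heart of the argument is the one-step statistic error $\normZZZZ{\widehat\Theta^{\mathrm{ML}}(\bm u_n^{\mathrm{ML}})-\Theta^{\mathcal G}[\bar u_n]}_p$, which I would split into three pieces. First, replacing the actual particles by the reference particles inside $\widehat\Theta^{\mathrm{ML}}$ is controlled by the local Lipschitz bound \cref{ass:frame:def:theta}(i); applied term by term to the $L{+}1$ summands of \cref{eq:frame:def:thetaml} (using \cref{ass:frame:def:G}(i) on the model factors) this contributes a factor proportional to $L$ multiplying $\mathcal E_n$. Second, the multilevel estimator evaluated on the \emph{reference} ensembles telescopes in expectation to the level-$L$ parameter, and its fluctuation is bounded level-by-level by \cref{ass:frame:def:theta}(ii), where the closeness of consecutive models from \cref{ass:frame:def:G}(iii) makes the $\ell$-th difference estimator of size $\lesssim J_\ell^{-1/2}2^{-\beta\ell/2}$. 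Third, the residual level-$L$ bias $\absZZZZ{\Theta^{\mathcal G_L}[\bar u_n]-\Theta^{\mathcal G}[\bar u_n]}$ is $\lesssim 2^{-\beta L/2}$ via \cref{rem:frame:def:theta} and \cref{ass:frame:def:G}(iii). Under \cref{eq:thm:frame:ml:choices} the second and third pieces are each $\lesssim\epsilon$, so the statistic error is $\lesssim L\,\mathcal E_n+\epsilon$.

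Inserting this into the one-step particle update \cref{eq:frame:def:ml-evol} and invoking the local Lipschitz bound \cref{ass:frame:def:psi} (together with \cref{ass:frame:def:G}(i),(iii) for the model terms) yields the recursion $\mathcal E_{n+1}\lesssim(1+cL)\,\mathcal E_n+\epsilon$. Since $\mathcal E_0=0$ and $N$ is fixed, solving this linear recursion gives $\mathcal E_N\lesssim\epsilon\,(1+cL)^{N-1}\lesssim\epsilon\,L^{N-1}$, and one more statistic read-out for $\Theta^\dagger$ at time $N$ supplies the last factor $L$, producing $\normZZZZ{\hat\theta^{\dagger,\mathrm{ML}}_N-\bar\theta^\dagger_N}_p\lesssim\epsilon\,L^N=\epsilon\log_2(\epsilon^{-1})^N$ because $L\eqsim\log_2(\epsilon^{-1})$; this is the origin of the $N$-th power of the logarithm in \cref{eq:thm:frame:ml:res}. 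For the cost I would evaluate $\mathrm{Cost}\eqsim\sum_{\ell=0}^L J_\ell 2^{\gamma\ell}$; substituting \cref{eq:thm:frame:ml:choices} the summand is $\propto 2^{(\gamma-\beta)\ell/3}$, and the geometric sum splits into the three regimes of \cref{eq:thm:frame:ml:cost}, with the borderline case $\beta=\gamma$ absorbing a polylogarithmic factor into $\epsilon^{-\delta}$. I would also verify that the allocation \cref{eq:thm:frame:ml:choices} is exactly the cost minimiser subject to the statistical-error budget $\sum_\ell J_\ell^{-1/2}2^{-\beta\ell/2}\eqsim\epsilon$, which is where the exponent $(\beta+2\gamma)/3$ comes from.

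The main obstacle is that every hypothesis is \emph{local}: the Lipschitz and moment bounds hold only while $\normZZZZ{u-u_0}_r\le d$. The induction must therefore be run as a bootstrap that simultaneously proves the error estimate and the containment of all actual and reference particles inside this neighbourhood, choosing $\epsilon_0$ small enough that $\mathcal E_n$ stays below $d$ over the finitely many steps $n\le N$; the boundedness supplied by \cref{ass:frame:def:G}(ii) and \cref{ass:frame:def:theta}(iv) keeps the reference particles in range. A secondary difficulty is bookkeeping the exponents: the generalized H\"older inequality (\cref{pr:intro:notation:holder}) forces several factors to be measured in $\normZZZZ{\cdot}_{2p}$ or higher, so the assumptions must be invoked for an increasing family of exponents and the constants $d,r,c_{\bullet}$ reconciled across them, which is legitimate because all iterates lie in $L^{\ge2}(\Omega,\mathbb R^{d_u})$. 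Finally, one must incur the lossy factor-$L$ bound from \cref{ass:frame:def:theta}(i) only on the particle-drift term, so that the variance-reduction gain $J_\ell^{-1/2}2^{-\beta\ell/2}$ survives and the advertised rate is not degraded.
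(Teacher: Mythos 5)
Your overall architecture --- a time-step induction on a worst-case particle error $\mathcal E_n$, a three-piece decomposition of the statistic error, the cost and error-budget computation, and the locality bootstrap --- matches the paper's proof closely, and your cost analysis and your explanation of the $\log_2(\epsilon^{-1})^N$ factor are correct. The genuine gap is your choice of reference particles. You couple each index $(\ell,\mathrm F,j)$, $(\ell,\mathrm C,j)$ to a particle solving the \emph{exact} mean-field recursion \cref{eq:frame:def:mf}, i.e.\ driven by $\mathcal G$ and $\Theta^{\mathcal G}[\bar u_n]$. With that choice the one-step estimate from \cref{ass:frame:def:psi} for a level-$\ell$ particle contains the term $\normZZZZ{\mathcal G_\ell(u_n^{\ell,\mathrm F,j})-\mathcal G(\bar u_n^{\ell,\mathrm F,j})}_r$, which by \cref{ass:frame:def:G}(i, iii) is only bounded by $c_{g,1}\mathcal E_n + c_{g,3}2^{-\beta\ell/2}$. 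The bias term $2^{-\beta\ell/2}$ is $O(1)$ at the low levels (it is small only for $\ell\approx L$, which is why the same move does work in the single-level proof), so already $\mathcal E_1=O(1)$, your recursion $\mathcal E_{n+1}\lesssim(1+cL)\,\mathcal E_n+\epsilon$ does not start from anything small, and the first piece of your statistic decomposition, $\lesssim L\,\mathcal E_n$, becomes useless. Keeping level-resolved errors does not help: summing the Lipschitz bounds of \cref{ass:frame:def:theta}(i) over levels still produces $\sum_\ell 2^{-\beta\ell/2}=O(1)$ with no cancellation available.

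The missing idea is the paper's auxiliary-particle construction: define $\bar u_{n+1}^\ell(\omega)=\Psi_n^{\mathcal G_\ell(\cdot,\omega)}(\bar u_n^\ell(\omega),\Theta^{\mathcal G}[\bar u_n],\xi_n(\omega))$, i.e.\ references that use the \emph{level-dependent} forward model but the mean-field interaction term, and set $\bar u_n^{\ell,\mathrm F,j}=\bar u_n^\ell(\omega^{\ell,j})$ and $\bar u_n^{\ell,\mathrm C,j}=\bar u_n^{\ell-1}(\omega^{\ell,j})$. Then the per-particle error carries no model bias and genuinely obeys your recursion, while the model bias is quarantined in the other two pieces: a preliminary induction gives $\normZZZZ{\bar u_n-\bar u_n^\ell}_p\lesssim 2^{-\beta\ell/2}$, hence $\normZZZZ{\bar u_n^{\ell,\mathrm F,j}-\bar u_n^{\ell,\mathrm C,j}}_p\lesssim 2^{-\beta\ell/2}$, which is exactly the input \cref{ass:frame:def:theta}(ii) needs to make the level-$\ell$ difference estimator of size $J_\ell^{-1/2}2^{-\beta\ell/2}$; and the residual bias $\absZZZZ{\Theta^{\mathcal G_L}[\bar u_n^L]-\Theta^{\mathcal G}[\bar u_n]}\lesssim 2^{-\beta L/2}\lesssim\epsilon$ is handled by \cref{rem:frame:def:theta}. (Note also that your telescoping sum collapses to $\Theta^{\mathcal G_L}$ evaluated at the level-$L$ auxiliary law, not at $\bar u_n$; with your identical references the $u$-part of \cref{ass:frame:def:theta}(ii) vanishes and the right order is recovered there only by accident, and this cannot repair the broken particle induction.) With this substitution the rest of your argument goes through essentially as in the paper.
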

        \begin{proof}
            The proof is given in \cref{sec:proof-ml}.
        \end{proof}
        \begin{theorem} \label{lmm:frame:sl}
            Let $\epsilon > 0$. If \cref{ass:frame:def:G,ass:frame:def:psi,ass:frame:def:theta} are satisfied and
            \begin{equation} \label{eq:lmm:frame:sl:choices}
                L = \floor{2\log_2(\epsilon^{-1})/\beta} \qquad \text{and} \qquad J \eqsim \epsilon^{-2},
            \end{equation}
            then for every $p\ge2$, there exists an $\epsilon_0>0$ such that
            \begin{equation} \label{eq:lmm:frame:sl:res}
                \normZZZZ{\hat \theta_N^{\dagger,L} - \bar \theta^\dagger_N}_p \lesssim \epsilon \qquad \text{when} \qquad \epsilon\le\epsilon_0
            \end{equation}
            with the single-level simulation algorithm from \cref{sec:frame:sl-mf}, for a cost
            \begin{equation}
                \mathrm{Cost} \eqsim \epsilon^{-(2 + 2\gamma/\beta)}.
            \end{equation}
        \end{theorem}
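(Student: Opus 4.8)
The plan is to bound the error by coupling the single-level ensemble to the mean-field dynamics. For each index $j$ I would introduce a mean-field reference particle $\bar u_n^j$ evolving by \cref{eq:frame:def:mf} with the same Gaussian noise $\xi_n^j$ and sample $\omega^j$ that drive $u_n^{L,j}$ in \cref{eq:frame:def:sl}, started from $\bar u_0^j = u_0^{L,j}$. By construction $\bar{\bm u}_n := \{\bar u_n^j\}_{j=1}^J$ are i.i.d.\ copies of $\bar u_n$, and the pairs $\{(u_n^{L,j}, \bar u_n^j)\}_j$ are exchangeable, so $e_n := \normZZZZ{u_n^{L,j}-\bar u_n^j}_p$ does not depend on $j$. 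Writing the target error as $\normZZZZ{\widehat\Theta^\dagger(\bm u_N^L)-\Theta^\dagger[\bar u_N]}_p$ and splitting off the i.i.d.\ reference ensemble, the $\Theta^\dagger$-versions of \cref{ass:frame:def:theta}(i) and~(iii) reduce this to $\lesssim e_N + J^{-1/2}$, so it suffices to control the particle error $e_N$.

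The core is a one-step recursion for $e_n$. Applying the Lipschitz bound of \cref{ass:frame:def:psi} to the difference of the two update maps bounds $e_{n+1}$ by the particle error $e_n$, the forward-model discrepancy $\normZZZZ{\mathcal G_L(u_n^{L,j})-\mathcal G(\bar u_n^j)}_r$, and the statistic discrepancy $\normZZZZ{\widehat\Theta^{\mathcal G_L}(\bm u_n^L)-\Theta^{\mathcal G}[\bar u_n]}_r$. The model discrepancy I would split as $\mathcal G_L(u_n^{L,j})-\mathcal G_L(\bar u_n^j)$ plus $\mathcal G_L(\bar u_n^j)-\mathcal G(\bar u_n^j)$ and bound by \cref{ass:frame:def:G}(i) and~(iii) as $\lesssim e_n + 2^{-\beta L/2}$. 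For the statistic I would insert the reference ensemble and write the discrepancy as a coupling term $\widehat\Theta^{\mathcal G_L}(\bm u_n^L)-\widehat\Theta^{\mathcal G}(\bar{\bm u}_n)$, controlled through \cref{ass:frame:def:theta}(i) by $e_n$ and the model discrepancy, plus a Monte Carlo term $\widehat\Theta^{\mathcal G}(\bar{\bm u}_n)-\Theta^{\mathcal G}[\bar u_n]$, controlled by $c_{\theta,3}J^{-1/2}$ through \cref{ass:frame:def:theta}(iii) -- this last step is exactly where the i.i.d.\ property of $\bar{\bm u}_n$ is essential. Collecting terms gives $e_{n+1} \lesssim e_n + 2^{-\beta L/2} + J^{-1/2}$.

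Since $e_0 = 0$ and $N$ is a fixed constant, iterating this affine recursion yields $e_N \lesssim 2^{-\beta L/2} + J^{-1/2}$, and hence $\normZZZZ{\hat\theta_N^{\dagger,L}-\bar\theta^\dagger_N}_p \lesssim 2^{-\beta L/2} + J^{-1/2}$. With $L = \floor{2\log_2(\epsilon^{-1})/\beta}$ one has $2^{-\beta L/2} \eqsim \epsilon$, and with $J \eqsim \epsilon^{-2}$ one has $J^{-1/2} \eqsim \epsilon$, so the error is $\lesssim \epsilon$. For the cost, \cref{ass:frame:def:G}(iv) gives cost $\lesssim 2^{\gamma L}$ per evaluation of $\mathcal G_L$; each of the $N$ steps evaluates it once per particle, so the total is $\eqsim N J\, 2^{\gamma L} \eqsim \epsilon^{-2}\,(2^{\beta L/2})^{2\gamma/\beta} \eqsim \epsilon^{-(2+2\gamma/\beta)}$, as claimed.

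The delicate point -- and the source of the restriction $\epsilon \le \epsilon_0$ -- is that \cref{ass:frame:def:G,ass:frame:def:psi,ass:frame:def:theta} hold only locally, in a $\normZZZZ{\cdot}_r$-ball of radius $d$ about the reference. I would therefore run the recursion as a simultaneous induction that also maintains the locality conditions (such as $\normZZZZ{u_n^{L,j}-\bar u_n^j}_r \le d$ and the analogous bound on the statistic discrepancy): once $\epsilon$ drops below a threshold $\epsilon_0$, the accumulated error stays below $d$, keeping the bounds applicable at the next step, with the boundedness parts \cref{ass:frame:def:G}(ii) and \cref{ass:frame:def:theta}(iv) ensuring the reference trajectory itself remains in the valid region. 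A second bookkeeping issue is that each Lipschitz bound relates a $\normZZZZ{\cdot}_p$ norm on the left to a possibly larger $\normZZZZ{\cdot}_r$ norm on the right; since there are only $N$ steps, I would fix a sufficiently high norm at $n=0$ -- finite because $u_0 \in L^{\ge2}(\Omega, \mathbb R^{d_u})$ -- and let the required exponent descend to $p$ by $n = N$. I expect this intertwined control of locality and norm indices, rather than any individual estimate, to be the main obstacle.
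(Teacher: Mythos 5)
Your proposal is correct and follows essentially the same route as the paper: couple each single-level particle to a mean-field particle $\bar u_n^j=\bar u_n(\omega^j)$ driven by the same $\omega^j$ and $\xi_n^j$, split the statistic error into a coupling term (\cref{ass:frame:def:theta}(i)) plus a Monte Carlo term (\cref{ass:frame:def:theta}(iii), giving $c_{\theta,3}J^{-1/2}$), and run the resulting one-step recursion as an induction over the $N$ steps while tracking the locality conditions and proving the bound for all $p\ge2$ simultaneously to absorb the $p$-to-$r$ norm shifts. The resulting error bound $\lesssim 2^{-\beta L/2}+J^{-1/2}\lesssim\epsilon$ and the cost count $NJ\,2^{\gamma L}\eqsim\epsilon^{-(2+2\gamma/\beta)}$ both match the paper's argument in \cref{sec:proof-sl}.
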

        \begin{proof}
            The proof is given in \cref{sec:proof-sl}.
        \end{proof}
        \begin{remark}[On the extra factor in \cref{eq:thm:frame:ml:res}] \label{rem:conv:ml:factor}
            The factor $\log_2(\epsilon^{-1})^N$ in \cref{eq:thm:frame:ml:res} also appears in the bounds of \cite{hoelMultilevelEnsembleKalman2016a} and its follow-up work \cite{chernovMultilevelEnsembleKalman2021a}. Like us, they note that this factor does not manifest in numerical tests. This is important for the feasibility of the method: while the asymptotic effect of the factor is limited since $\log_2(\epsilon^{-1})^N\epsilon\lesssim\epsilon^{1-\delta}$ for all $\delta>0$, it would introduce an enormous constant when $N$ is moderate or large.
        \end{remark}

\section{Proof of \texorpdfstring{\cref*{thm:frame:ml}}{the multilevel convergence rate}} \label{sec:proof-ml}
    To prove \cref{thm:frame:ml}, we will make use of a number of auxiliary particles
    \begin{equation}
        \bar u_{n+1}^\ell(\omega) = \Psi_n^{\mathcal G_\ell(\cdot, \omega)}(\bar u_n^\ell(\omega), \Theta^{\mathcal G}[\bar u_n], \xi_n(\omega)).
    \end{equation}
    Note that this is not a McKean--Vlasov-type equation: the evolution depends on the law of $\bar u_n$, not of $\bar u_n^\ell$ itself. We define $\bar{\bm u}_n^{\mathrm{ML}}\coloneqq(\bar{\bm u}_n^{0,{\mathrm F}}, (\bar{\bm u}_n^{1,{\mathrm F}},\bar{\bm u}_n^{1,{\mathrm C}}), \ldots, (\bar{\bm u}_n^{L,{\mathrm F}},\bar{\bm u}_n^{L,{\mathrm C}}))$, where $\bar u_n^{\ell,{\mathrm F},j}\coloneqq \bar u_n^\ell(\omega^{\ell,j})$ and $\bar u_n^{\ell,{\mathrm C},j}\coloneqq\bar u_n^{\ell-1}(\omega^{\ell,j})$. These auxiliary particles will serve as a bridge between the mean-field and finite-ensemble particles: they use the interaction terms of the former, but the forward model of the latter.

    \Cref{ass:frame:def:G,ass:frame:def:psi,ass:frame:def:theta} contain locality conditions. For each induction step in the proofs below, one must ensure a sufficiently small $\epsilon$ (and, in some proofs, a sufficiently large $\ell$), such that the locality conditions with $u_0=\bar u_n$ and $\theta_0 = \Theta^{\mathcal G}[\bar u_n]$ will still hold \emph{after} having taken this step (allowing us to continue the induction). The fact that this is possible follows from the inequalities in the proofs. We call $\epsilon_0>0$ (and $\ell_0\ge0$) the smallest (and largest) of these values. This allows us to use \cref{ass:frame:def:G,ass:frame:def:psi,ass:frame:def:theta} whenever $\epsilon \le \epsilon_0$ (and $\ell \ge \ell_0$). In addition, assumptions \ref{ass:frame:def:G}(ii) and \ref{ass:frame:def:theta}(iv) will ensure that $u_1$, $u_2$, and $\theta_2$ in \cref{ass:frame:def:psi} will always be in $L^{\ge2}(\Omega, \mathbb R^{d_u})$ or $L^{\ge2}(\Omega, \mathbb R^{d_\theta})$, as required.

    For notational convenience, we allow ourselves to write $\Theta^{\mathcal{G}_{-1}} \coloneq 0$ and $\widehat \Theta^{\mathcal{G}_{-1}} \coloneq 0$.

    \begin{lemma} \label{lmm:apdx-proof-sl:bars}
        For all $n\ge 0$, $p \ge 2$, and $\ell\ge\ell_0$, it holds that $\normZZZZ{\bar u_n - \bar u_n^\ell}_p \lesssim 2^{-\beta\ell/2}$.
    \end{lemma}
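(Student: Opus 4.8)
The plan is to proceed by induction on the time step $n$, establishing the bound simultaneously for every $p \ge 2$. Since the auxiliary and mean-field particles share the same initial condition $\bar u_0^\ell = \bar u_0$, the base case is immediate: $\normZZZZ{\bar u_0 - \bar u_0^\ell}_p = 0 \lesssim 2^{-\beta\ell/2}$.

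For the inductive step, the key observation is that, by construction, $\bar u_{n+1}$ and $\bar u_{n+1}^\ell$ are propagated with the \emph{same} mean-field interaction $\Theta^{\mathcal G}[\bar u_n]$ and the \emph{same} noise realization $\xi_n$, differing only through the forward model ($\mathcal G$ versus $\mathcal G_\ell$). I would apply the local Lipschitz bound of \cref{ass:frame:def:psi} with $g_1 = \mathcal G$, $g_2 = \mathcal G_\ell$, $u_1 = \bar u_n$, $u_2 = \bar u_n^\ell$, and $\theta_1 = \theta_2 = \Theta^{\mathcal G}[\bar u_n]$. The $\theta$-term then vanishes, leaving
\[
    \normZZZZ{\bar u_{n+1} - \bar u_{n+1}^\ell}_p \le c_\psi\bigl(\normZZZZ{\bar u_n - \bar u_n^\ell}_r + \normZZZZ{\mathcal G(\bar u_n) - \mathcal G_\ell(\bar u_n^\ell)}_r\bigr)
\]
for the index $r \ge 2$ supplied by the assumption. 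It remains to control the forward-model difference: splitting it with the triangle inequality produces a model-approximation part $\normZZZZ{\mathcal G(\bar u_n) - \mathcal G_\ell(\bar u_n)}_r \lesssim 2^{-\beta\ell/2}$, bounded by \cref{ass:frame:def:G}(iii), and a Lipschitz part $\normZZZZ{\mathcal G_\ell(\bar u_n) - \mathcal G_\ell(\bar u_n^\ell)}_r \lesssim \normZZZZ{\bar u_n - \bar u_n^\ell}_{r'}$, bounded by \cref{ass:frame:def:G}(i). Collecting the terms yields $\normZZZZ{\bar u_{n+1} - \bar u_{n+1}^\ell}_p \lesssim \normZZZZ{\bar u_n - \bar u_n^\ell}_{r} + \normZZZZ{\bar u_n - \bar u_n^\ell}_{r'} + 2^{-\beta\ell/2}$, and the induction hypothesis, applied at the finitely many indices $\ge 2$ that arise, closes the step.

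The mathematical content is light, and I expect the main obstacle to be bookkeeping rather than any genuine difficulty. Two points require care. First, each assumption returns a norm index $r$ (depending on $p$ and on the base point $u_0 = \bar u_n$) instead of $p$ itself, so the induction hypothesis must be phrased for all $p \ge 2$ at once, and one must check that every index arising stays $\ge 2$; this holds because $\bar u_n, \bar u_n^\ell \in L^{\ge 2}(\Omega, \mathbb R^{d_u})$, which follows from the boundedness in \cref{ass:frame:def:G}(ii) and \cref{ass:frame:def:theta}(iv). Second, every assumption is \emph{local}, so its locality condition $\normZZZZ{\bar u_n^\ell - \bar u_n}_r \le d$ (with $u_0 = \bar u_n$) must be verified before use; since the quantity being bounded is itself $\lesssim 2^{-\beta\ell/2}$, this is ensured once $\ell \ge \ell_0$, which is exactly the restriction in the statement. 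The implicit constant picks up a factor at each of the $n$ iterations, but $n$ is fixed, so this poses no problem.
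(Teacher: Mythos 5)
Your proposal is correct and follows essentially the same route as the paper: induction on $n$ from the shared initial condition, applying \cref{ass:frame:def:psi} with identical $\theta$-arguments so that only the state and forward-model terms remain, and then controlling $\normZZZZ{\mathcal G(\bar u_n) - \mathcal G_\ell(\bar u_n^\ell)}_r$ via \cref{ass:frame:def:G}(i, iii). Your remarks on tracking the norm indices and the locality conditions match the bookkeeping the paper handles once, up front, via the definitions of $\epsilon_0$ and $\ell_0$.
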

    \begin{proof}
        For $n=0$, the statement definitely holds, as $\bar u_0 = \bar u_0^\ell$. We proceed by induction: if $\normZZZZ{\bar u_n - \bar u_n^\ell}_p \lesssim 2^{-\beta\ell/2}$ for all $p\ge 2$, then by applying assumptions \ref{ass:frame:def:psi} and \ref{ass:frame:def:G}(i, iii),
        \begin{equation*}
        \begin{aligned}
            \normZZZZ{\bar u_{n+1} - \bar u_{n+1}^\ell}_p &= \normZZZZ{\Psi_n^{\mathcal G}(\bar u_n, \Theta^{\mathcal G}[\bar u_n], \xi_n) - \Psi_n^{\mathcal G_\ell}(\bar u_n^\ell, \Theta^{\mathcal G}[\bar u_n], \xi_n)}_p\\
            &\leq c_{\psi}(\normZZZZ{\bar u_n - \bar u_n^\ell}_r + \normZZZZ{\mathcal G(\bar u_n) - \mathcal G_\ell(\bar u_n^\ell)}_r)\\
            &\leq c_{\psi}\bigl((1+c_{g,1})\normZZZZ{\bar u_n - \bar u_n^\ell}_{r'} + c_{g,3}2^{-\beta \ell/2}\bigr) \lesssim 2^{-\beta\ell/2}
        \end{aligned}
        \end{equation*}
        for all $p\ge2$. The last inequality holds due to the induction hypothesis.
    \end{proof}

    \begin{corollary} \label{cor:apdx-proof-sl:bars}
        From \cref{lmm:apdx-proof-sl:bars} and the triangle inequality follows $\normZZZZ{\bar u_n^{\ell+1} - \bar u_n^\ell}_p \lesssim 2^{-\beta \ell/2}$.
    \end{corollary}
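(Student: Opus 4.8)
The plan is to bridge the two auxiliary particles $\bar u_n^{\ell+1}$ and $\bar u_n^\ell$ through the mean-field particle $\bar u_n$, exploiting that \cref{lmm:apdx-proof-sl:bars} already controls the distance from $\bar u_n$ to each level. First I would apply the triangle inequality to write
\begin{equation*}
    \normZZZZ{\bar u_n^{\ell+1} - \bar u_n^\ell}_p \le \normZZZZ{\bar u_n^{\ell+1} - \bar u_n}_p + \normZZZZ{\bar u_n - \bar u_n^\ell}_p.
\end{equation*}
Both summands are then bounded directly by \cref{lmm:apdx-proof-sl:bars}, yielding $\normZZZZ{\bar u_n^{\ell+1} - \bar u_n}_p \lesssim 2^{-\beta(\ell+1)/2}$ and $\normZZZZ{\bar u_n - \bar u_n^\ell}_p \lesssim 2^{-\beta\ell/2}$, both valid under the standing hypothesis $\ell \ge \ell_0$.

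The only remaining step is to notice that the two decay rates are of the same order: since $\beta \ge 0$ we have $2^{-\beta(\ell+1)/2} = 2^{-\beta/2}\,2^{-\beta\ell/2} \le 2^{-\beta\ell/2}$, so the sum is at most $2\cdot 2^{-\beta\ell/2} \lesssim 2^{-\beta\ell/2}$, which is exactly the claim. I do not anticipate any genuine obstacle here — the corollary is an immediate consequence of the lemma, with the hidden constant in $\lesssim$ being just twice the one from \cref{lmm:apdx-proof-sl:bars}. The single point that merits a moment's care is the index range: the bound on the $(\ell+1)$-term formally requires $\ell+1 \ge \ell_0$, which is automatic whenever the statement is invoked for $\ell \ge \ell_0$, so no additional restriction on $\ell$ is introduced.
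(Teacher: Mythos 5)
Your argument is exactly the one the paper intends: the corollary is stated as an immediate consequence of \cref{lmm:apdx-proof-sl:bars} via the triangle inequality through $\bar u_n$, and your decomposition, the absorption of $2^{-\beta(\ell+1)/2}$ into $2^{-\beta\ell/2}$, and the remark on the index range all match. No issues.
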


    \begin{lemma} \label{lmm:apdx-proof-ml:bridge-theta}
        For all $n\ge0$ and $p\ge2$, when $\epsilon\le\epsilon_0$ it holds that
        \begin{equation}
            \normZZZZ{\widehat\Theta^{\mathrm{ML}}(\bm{\bar u}_n^{\mathrm{ML}}) - \Theta^{\mathcal G_L}[\bar u_n^L]}_p \lesssim \epsilon.
        \end{equation}
    \end{lemma}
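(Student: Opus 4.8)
The plan is to exploit the telescoping structure of $\widehat\Theta^{\mathrm{ML}}$ in \cref{eq:frame:def:thetaml}. First I would observe that, since the auxiliary coarse ensemble $\bar{\bm u}_n^{\ell,{\mathrm C}}$ is distributed as $\bar u_n^{\ell-1}$ and the fine ensemble $\bar{\bm u}_n^{\ell,{\mathrm F}}$ as $\bar u_n^\ell$, the population-level counterpart of the multilevel statistic telescopes exactly:
\begin{equation*}
    \Theta^{\mathcal G_0}[\bar u_n^0] + \sum_{\ell=1}^L\bigl(\Theta^{\mathcal G_\ell}[\bar u_n^\ell] - \Theta^{\mathcal G_{\ell-1}}[\bar u_n^{\ell-1}]\bigr) = \Theta^{\mathcal G_L}[\bar u_n^L].
\end{equation*}
Subtracting this identity from $\widehat\Theta^{\mathrm{ML}}(\bm{\bar u}_n^{\mathrm{ML}})$ rewrites the quantity to be bounded as a level-$0$ Monte Carlo error $\widehat\Theta^{\mathcal G_0}(\bar{\bm u}_n^{0,{\mathrm F}}) - \Theta^{\mathcal G_0}[\bar u_n^0]$ plus, for each $1\le\ell\le L$, a correction error $(\widehat\Theta^{\mathcal G_\ell}(\bar{\bm u}_n^{\ell,{\mathrm F}}) - \widehat\Theta^{\mathcal G_{\ell-1}}(\bar{\bm u}_n^{\ell,{\mathrm C}})) - (\Theta^{\mathcal G_\ell}[\bar u_n^\ell] - \Theta^{\mathcal G_{\ell-1}}[\bar u_n^{\ell-1}])$. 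The triangle inequality then reduces the lemma to bounding each term separately.

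Next I would bound the individual terms. The level-$0$ term is controlled directly by \cref{ass:frame:def:theta}(iii), giving $\lesssim J_0^{-1/2}$. For the level-$\ell$ correction I would apply \cref{ass:frame:def:theta}(ii) with $g_1=\mathcal G_\ell$, $u_1=\bar u_n^\ell$, $g_2=\mathcal G_{\ell-1}$, $u_2=\bar u_n^{\ell-1}$; here the fine and coarse ensembles share the randomness $\omega^{\ell,j}$ across matching $j$ and are each internally i.i.d., which is exactly the coupling the assumption calls for. This yields a bound $\lesssim J_\ell^{-1/2}(\normZZZZ{\bar u_n^\ell - \bar u_n^{\ell-1}}_r + \normZZZZ{\mathcal G_\ell(\bar u_n^\ell) - \mathcal G_{\ell-1}(\bar u_n^{\ell-1})}_r)$. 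The first norm is $\lesssim 2^{-\beta\ell/2}$ by \cref{cor:apdx-proof-sl:bars}. For the second I would insert the exact model and split into $\normZZZZ{\mathcal G_\ell(\bar u_n^\ell) - \mathcal G_{\ell-1}(\bar u_n^\ell)}_r + \normZZZZ{\mathcal G_{\ell-1}(\bar u_n^\ell) - \mathcal G_{\ell-1}(\bar u_n^{\ell-1})}_r$, bounding the first summand by two applications of \cref{ass:frame:def:G}(iii) and the second by the Lipschitz bound \cref{ass:frame:def:G}(i) together with \cref{cor:apdx-proof-sl:bars}; both are $\lesssim 2^{-\beta\ell/2}$. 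Hence each correction term is $\lesssim J_\ell^{-1/2}2^{-\beta\ell/2}$, so the full error is bounded by $J_0^{-1/2} + \sum_{\ell=1}^L J_\ell^{-1/2}2^{-\beta\ell/2}$.

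Finally I would substitute the prescribed $L$ and $J_\ell$ from \cref{eq:thm:frame:ml:choices}. With $L=\floor{2\log_2(\epsilon^{-1})/\beta}$ one has $2^{-\beta L/2}\eqsim\epsilon$, and plugging in $J_\ell\eqsim 2^{-\frac{\beta+2\gamma}3\ell}(\cdots)$ turns each summand into $2^{\frac{\gamma-\beta}3\ell}$ times a factor depending only on $L$. When $\beta>\gamma$ the series decreases geometrically and is dominated by its $\ell=0$ value $\eqsim 2^{-\beta L/2}\eqsim\epsilon$; when $\beta<\gamma$ it increases, so the scaling of $J_\ell$ (in particular the extra $2^{\frac{\beta+2\gamma}3L}$ factor) is tuned so that even the dominant $\ell=L$ term is $\eqsim 2^{-\beta L/2}\eqsim\epsilon$; the critical case $\beta=\gamma$ produces $L$ equal summands each $\eqsim L^{-1}2^{-\beta L/2}$ — which is precisely why the $L^2$ correction appears in $J_\ell$ — summing to $\eqsim\epsilon$. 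I expect the balancing of this weighted sum across the three regimes to be the main obstacle, since it is where the exponents $\tfrac{\beta+2\gamma}3$ and the $L^2$ factor have to make every case collapse to $\eqsim\epsilon$. The only remaining care is the standard caveat that for the finitely many levels $\ell<\ell_0$ the factor $2^{-\beta\ell/2}$ is unavailable, but the bracketed norms stay bounded there by \cref{ass:frame:def:G}(i,ii), so those terms are each still $\lesssim 2^{-\beta L/2}\lesssim\epsilon$ and are absorbed into the constant.
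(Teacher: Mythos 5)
Your proposal is correct and follows essentially the same route as the paper: telescope the multilevel statistic against $\Theta^{\mathcal G_L}[\bar u_n^L]$, bound each correction term via \cref{ass:frame:def:theta}(ii) together with \cref{cor:apdx-proof-sl:bars} and \cref{ass:frame:def:G}(i, iii), and check that the prescribed $J_\ell$ make the weighted sum $\sum_\ell J_\ell^{-1/2}2^{-\beta\ell/2}$ collapse to $\eqsim\epsilon$ in all three regimes. The only cosmetic difference is at the low levels $\ell<\ell_0$, where the paper falls back on \cref{ass:frame:def:theta}(iii) (yielding $2c_{\theta,3}J_\ell^{-1/2}$) rather than invoking \cref{ass:frame:def:theta}(ii) with merely bounded bracketed norms — the locality preconditions of (ii) are only guaranteed for $\ell\ge\ell_0$ — but both give the same $\lesssim J_\ell^{-1/2}\lesssim\epsilon$ contribution from those finitely many terms.
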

    \begin{proof}
        We use the definition (\ref{eq:frame:def:thetaml}) of $\widehat\Theta^{\mathrm{ML}}$ to decompose $\normZZZZ{\widehat\Theta^{\mathrm{ML}}(\bm{\bar u}_n^{\mathrm{ML}}) - \Theta^{\mathcal G_L}[\bar u_n^L]}_p$, write $\Theta^{\mathcal G_L}[\bar u_n^L]$ as a telescoping sum, and then use the triangle inequality to get
        \begin{align*}
            &\normZZZZ[\Big]{\widehat\Theta^{\mathrm{ML}}(\bm{\bar u}_n^{\mathrm{ML}}) - \Theta^{\mathcal G_L}[\bar u_n^L]}_p \\&\qquad\le \sum\nolimits_{\ell=0}^L\normZZZZ[\Big]{\left(\widehat\Theta^{\mathcal G_\ell}(\bm{\bar u}_n^{\ell,{\mathrm F}}) - \widehat\Theta^{\mathcal G_{\ell-1}}(\bm{\bar u}_n^{\ell,{\mathrm C}})\right) - \left(\Theta^{\mathcal G_\ell}[\bar u_n^{\ell,{\mathrm F}}] - \Theta^{\mathcal G_{\ell-1}}[\bar u_n^{\ell,{\mathrm C}}]\right)}_p.
        \end{align*}
        Then, \cref{ass:frame:def:theta}(ii) bounds each term with $\ell\ge\ell_0$ by $c_{\theta,2}J_\ell^{-1/2}(\normZZZZ{\bar u_n^{\ell,{\mathrm F}} - \bar u_n^{\ell,{\mathrm C}}}_r + \normZZZZ{\mathcal G_\ell(\bar u_n^{\ell,{\mathrm F}})-\mathcal G_{\ell-1}(\bar u_n^{\ell,{\mathrm C}})}_r)$, while \cref{ass:frame:def:theta}(iii) ensures that the others are at most $2c_{\theta,3}J_\ell^{-1/2}$. By \cref{ass:frame:def:G}(i) and \cref{cor:apdx-proof-sl:bars}, we conclude that
        \begin{equation*}
        \begin{aligned}
            &\normZZZZ{\widehat\Theta^{\mathrm{ML}}(\bm{\bar u}_n^{\mathrm{ML}}) - \Theta^{\mathcal G_L}[\bar u_n^L]}_p \lesssim \sum\nolimits_{\ell=0}^{\ell_0-1}J_\ell^{-1/2} + \sum\nolimits_{\ell=\ell_0}^LJ_\ell^{-1/2}2^{-\beta \ell/2} 
            \\ &\qquad\le \Biggl(\ell_02^{(\beta+2\gamma)\ell_0/6} + \sum\nolimits_{\ell=0}^L{2^{(\gamma-\beta)\ell/3}}\Biggr)\left\{\rule{0cm}{0.7cm}\right.\begin{array}{ll}
                2^{-\beta L/2} & \text{if $\beta > \gamma$}\\
                L^{-1}2^{-\beta L/2} & \text{if $\beta = \gamma$}\\
                2^{-(\beta+2\gamma)L/6} & \text{if $\beta < \gamma$}
            \end{array}
            \\&\qquad\lesssim\epsilon.
        \end{aligned}
        \end{equation*}
    \end{proof}

    \begin{lemma} \label{lmm:apdx-proof-sl:aux}
        For all $n\ge0$, when $\epsilon\le\epsilon_0$ it holds that $\absZZZZ{\Theta^{\mathcal G_L}[\bar u_n^L] - \Theta^{\mathcal G}[\bar u_n]} \lesssim \epsilon$.
    \end{lemma}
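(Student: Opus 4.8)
The plan is to reduce everything to the parameter-difference bound of \cref{rem:frame:def:theta} and then invoke \cref{lmm:apdx-proof-sl:bars}. Since both $\Theta^{\mathcal G_L}[\bar u_n^L]$ and $\Theta^{\mathcal G}[\bar u_n]$ are deterministic statistical parameters, the quantity to bound is a genuine norm of vectors, and \cref{rem:frame:def:theta}, applied with $g_1 = \mathcal G_L$, $u_1 = \bar u_n^L$, $g_2 = \mathcal G$, and $u_2 = \bar u_n$, immediately yields
\[
    \absZZZZ{\Theta^{\mathcal G_L}[\bar u_n^L] - \Theta^{\mathcal G}[\bar u_n]} \lesssim \normZZZZ{\bar u_n^L - \bar u_n}_r + \normZZZZ{\mathcal G_L(\bar u_n^L) - \mathcal G(\bar u_n)}_r.
\]
This step is legitimate provided the locality conditions with $u_0 = \bar u_n$ hold, which they do once $\epsilon \le \epsilon_0$, since $L \ge \ell_0$ then.

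The first term is handled directly by \cref{lmm:apdx-proof-sl:bars}, which gives $\normZZZZ{\bar u_n^L - \bar u_n}_r \lesssim 2^{-\beta L/2}$. For the second term, I would insert $\mathcal G_L(\bar u_n)$ and split with the triangle inequality: the piece $\normZZZZ{\mathcal G_L(\bar u_n^L) - \mathcal G_L(\bar u_n)}_r$ is controlled by the Lipschitz bound \cref{ass:frame:def:G}(i) and then again by \cref{lmm:apdx-proof-sl:bars}, while $\normZZZZ{\mathcal G_L(\bar u_n) - \mathcal G(\bar u_n)}_r$ is exactly the approximation rate \cref{ass:frame:def:G}(iii), bounded by $c_{g,3}2^{-\beta L/2}$. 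Both pieces are therefore $\lesssim 2^{-\beta L/2}$, so the entire right-hand side collapses to $\absZZZZ{\Theta^{\mathcal G_L}[\bar u_n^L] - \Theta^{\mathcal G}[\bar u_n]} \lesssim 2^{-\beta L/2}$.

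The last step is to convert this level bound into the stated $\epsilon$-bound using the choice $L = \floor{2\log_2(\epsilon^{-1})/\beta}$. Since $L > 2\log_2(\epsilon^{-1})/\beta - 1$, we get $\beta L/2 > \log_2(\epsilon^{-1}) - \beta/2$, hence $2^{-\beta L/2} < 2^{\beta/2}\epsilon \lesssim \epsilon$, which finishes the argument.

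I expect no genuine difficulty here; the proof is a single application of ingredients already established. The only points requiring care are bookkeeping: verifying that the locality conditions are met so that \cref{rem:frame:def:theta} and \cref{ass:frame:def:G} may legitimately be invoked (guaranteed by $\epsilon \le \epsilon_0$ and the accompanying $L \ge \ell_0$), and keeping the direction of the floor inequality correct so that the harmless constant $2^{\beta/2}$ lands on the favorable side of the final estimate.
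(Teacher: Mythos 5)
Your proof is correct and follows essentially the same route as the paper's: apply \cref{rem:frake}{}\kern-1.2em\cref{rem:frame:def:theta}, split the forward-model difference with \cref{ass:frame:def:G}(i, iii), control the remaining terms via \cref{lmm:apdx-proof-sl:bars}, and convert $2^{-\beta L/2}\lesssim\epsilon$ using the choice of $L$. The only difference is that you spell out the floor-function bookkeeping explicitly, which the paper leaves implicit.
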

    \begin{proof}
        Application of \cref{rem:frame:def:theta} yields
        \begin{equation*}
        \begin{aligned}
            \absZZZZ{\Theta^{\mathcal G_L}[\bar u_n^L] - \Theta^{\mathcal G}[\bar u_n]} &\le (c_{\theta,1} + c_{\theta,2})(\normZZZZ{\bar u_n^L - \bar u_n}_r + \normZZZZ{\mathcal G_L(\bar u_n^L) - \mathcal G(\bar u_n)}_r)\\
            &\le (c_{\theta,1} + c_{\theta,2})\left((1+c_{g,1})\normZZZZ{\bar u_n^L - \bar u_n}_{r'} + c_{g,3}2^{-\beta L/2}\right) \lesssim \epsilon,
        \end{aligned}
        \end{equation*}
        where the inequalities holds due to \cref{ass:frame:def:G}(i, iii), \cref{lmm:apdx-proof-sl:bars}, and \cref{eq:lmm:frame:sl:choices}.
    \end{proof}

    \begin{lemma} \label{lmm:apdx-proof-ml:double}
        For all $n\ge 0$, $0\le\ell\le L$, $1\le j\le J$, and $p \ge 2$, when $\epsilon \le \epsilon_0$, it holds that
        \begin{subequations} \label{eq:lmm:apdx-proof-ml:double}
        \begin{align}
            \label{eq:lmm:apdx-proof-ml:double:u} \normZZZZ{u_n^{\ell,\{{\mathrm F},{\mathrm C}\},j} - \bar u_n^{\ell,\{{\mathrm F},{\mathrm C}\},j}}_p &\lesssim \epsilon\log_2(\epsilon^{-1})^{n-1},\\
            \label{eq:lmm:apdx-proof-ml:double:theta} \normZZZZ{\widehat\Theta^{\mathrm{ML}}(\bm u_n^{\mathrm{ML}}) - \Theta^{\mathcal G}[\bar u_n]}_p &\lesssim \epsilon\log_2(\epsilon^{-1})^n,\\
            \label{eq:lmm:apdx-proof-ml:double:theta-dagger} \normZZZZ{\widehat\Theta^{\dagger,{\mathrm{ML}}}(\bm u_n^{\mathrm{ML}}) - \Theta^\dagger[\bar u_n]}_p &\lesssim \epsilon\log_2(\epsilon^{-1})^n.
        \end{align}
        \end{subequations}
    \end{lemma}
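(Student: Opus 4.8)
The plan is to establish the three bounds \eqref{eq:lmm:apdx-proof-ml:double:u}--\eqref{eq:lmm:apdx-proof-ml:double:theta-dagger} simultaneously by induction on $n$, since they are mutually coupled: the particle bound \eqref{eq:lmm:apdx-proof-ml:double:u} at step $n+1$ is built from the interaction bound \eqref{eq:lmm:apdx-proof-ml:double:theta} at step $n$, while \eqref{eq:lmm:apdx-proof-ml:double:theta} and \eqref{eq:lmm:apdx-proof-ml:double:theta-dagger} at a given step are built from \eqref{eq:lmm:apdx-proof-ml:double:u} at that same step. The base case $n=0$ is immediate: since the finite-ensemble and auxiliary particles are initialised from the same $\bar u_0$ and share the noise seeds $\omega^{\ell,j}$, we have $u_0^{\ell,\{{\mathrm F},{\mathrm C}\},j}=\bar u_0^{\ell,\{{\mathrm F},{\mathrm C}\},j}$, so \eqref{eq:lmm:apdx-proof-ml:double:u} holds (the left-hand side vanishes) and $\widehat\Theta^{\mathrm{ML}}(\bm u_0^{\mathrm{ML}})=\widehat\Theta^{\mathrm{ML}}(\bm{\bar u}_0^{\mathrm{ML}})$; bounds \eqref{eq:lmm:apdx-proof-ml:double:theta} and \eqref{eq:lmm:apdx-proof-ml:double:theta-dagger} then follow from \cref{lmm:apdx-proof-ml:bridge-theta} and \cref{lmm:apdx-proof-sl:aux} alone.

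For the inductive step on \eqref{eq:lmm:apdx-proof-ml:double:u}, I would write $u_{n+1}^{\ell,{\mathrm F},j}$ and $\bar u_{n+1}^{\ell,{\mathrm F},j}$ through their defining $\Psi_n^{\mathcal G_\ell}$-recursions (and the analogous $\mathcal G_{\ell-1}$-recursions for the coarse particles), which differ only in their state and interaction arguments, since they share the forward model and the noise $\xi_n^{\ell,j}$. Applying the local Lipschitz bound of \cref{ass:frame:def:psi} with $g_1=g_2=\mathcal G_\ell$ and then \cref{ass:frame:def:G}(i) to absorb the $\mathcal G_\ell$-term bounds the left-hand side by a constant times $\normZZZZ{u_n^{\ell,{\mathrm F},j}-\bar u_n^{\ell,{\mathrm F},j}}_{r'}+\normZZZZ{\widehat\Theta^{\mathrm{ML}}(\bm u_n^{\mathrm{ML}})-\Theta^{\mathcal G}[\bar u_n]}_r$. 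The first summand is $\lesssim\epsilon\log_2(\epsilon^{-1})^{n-1}$ by the induction hypothesis \eqref{eq:lmm:apdx-proof-ml:double:u}, and the second is $\lesssim\epsilon\log_2(\epsilon^{-1})^n$ by \eqref{eq:lmm:apdx-proof-ml:double:theta}; the latter dominates, giving the required $\epsilon\log_2(\epsilon^{-1})^n$ at step $n+1$.

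For \eqref{eq:lmm:apdx-proof-ml:double:theta} at a step (now invoking \eqref{eq:lmm:apdx-proof-ml:double:u} at that same step), I would insert the auxiliary quantities via the triangle inequality,
\[
\begin{aligned}
\normZZZZ{\widehat\Theta^{\mathrm{ML}}(\bm u_n^{\mathrm{ML}})-\Theta^{\mathcal G}[\bar u_n]}_p
&\le \normZZZZ{\widehat\Theta^{\mathrm{ML}}(\bm u_n^{\mathrm{ML}})-\widehat\Theta^{\mathrm{ML}}(\bm{\bar u}_n^{\mathrm{ML}})}_p \\
&\quad + \normZZZZ{\widehat\Theta^{\mathrm{ML}}(\bm{\bar u}_n^{\mathrm{ML}})-\Theta^{\mathcal G_L}[\bar u_n^L]}_p \\
&\quad + \absZZZZ{\Theta^{\mathcal G_L}[\bar u_n^L]-\Theta^{\mathcal G}[\bar u_n]},
\end{aligned}
\]
whose last two terms are $\lesssim\epsilon$ by \cref{lmm:apdx-proof-ml:bridge-theta} and \cref{lmm:apdx-proof-sl:aux}. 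For the first term I would expand both multilevel statistics with their telescoping definition \eqref{eq:frame:def:thetaml}, apply the triangle inequality across the $2L+1$ subensemble contributions, and bound each by \cref{ass:frame:def:theta}(i) together with \cref{ass:frame:def:G}(i); every contribution is then controlled by a per-particle difference $\lesssim\epsilon\log_2(\epsilon^{-1})^{n-1}$. Summing over the $2L+1\eqsim\log_2(\epsilon^{-1})$ levels (using $L=\floor{2\log_2(\epsilon^{-1})/\beta}$ from \eqref{eq:thm:frame:ml:choices}) contributes exactly one extra logarithmic factor, yielding $\lesssim\epsilon\log_2(\epsilon^{-1})^n$ and hence the claim. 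Bound \eqref{eq:lmm:apdx-proof-ml:double:theta-dagger} follows by the identical argument applied to $\widehat\Theta^{\dagger,{\mathrm{ML}}}$, since \cref{ass:frame:def:theta} (and thus the dagger analogues of \cref{lmm:apdx-proof-ml:bridge-theta} and \cref{lmm:apdx-proof-sl:aux}) is assumed to hold for $\Theta^\dagger$ and $\widehat\Theta^\dagger$ as well.

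The main obstacle --- and the entire source of the $\log_2(\epsilon^{-1})^N$ factor in \cref{thm:frame:ml} --- is the bookkeeping of the two logarithmic exponents across the coupling: I must verify that each interaction bound sits one power of $\log_2(\epsilon^{-1})$ above the particle bound feeding it (because of the sum over $\eqsim\log_2(\epsilon^{-1})$ levels) and that this single extra power is exactly reabsorbed when the next particle step is taken, so the induction closes with the exponent incrementing by one per time step rather than compounding. A secondary technicality is the juggling of the norm indices $r,r'$ when chaining \cref{ass:frame:def:psi}, \cref{ass:frame:def:theta}(i), and \cref{ass:frame:def:G}(i); this is harmless because every inductive bound is proved for all $p\ge2$, so one may always pass to a higher norm, and the locality conditions remain in force for $\epsilon\le\epsilon_0$ as arranged in the preamble to this section.
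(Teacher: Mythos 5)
Your proposal is correct and follows essentially the same route as the paper's proof: a simultaneous induction in which \cref{eq:lmm:apdx-proof-ml:double:theta,eq:lmm:apdx-proof-ml:double:theta-dagger} at step $n$ are derived from \cref{eq:lmm:apdx-proof-ml:double:u} at step $n$ via the triangle-inequality insertion of $\widehat\Theta^{\mathrm{ML}}(\bm{\bar u}_n^{\mathrm{ML}})$ and $\Theta^{\mathcal G_L}[\bar u_n^L]$ (controlled by \cref{lmm:apdx-proof-ml:bridge-theta,lmm:apdx-proof-sl:aux}), with the telescoping expansion over $L\eqsim\log_2(\epsilon^{-1})$ levels supplying the one extra logarithmic factor, and \cref{eq:lmm:apdx-proof-ml:double:u} at step $n+1$ obtained from \cref{ass:frame:def:psi} and \cref{ass:frame:def:G}(i). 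Your bookkeeping of the exponents and of the norm indices matches the paper's argument exactly.
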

    \begin{proof}
        We first show that \cref{eq:lmm:apdx-proof-ml:double:theta,eq:lmm:apdx-proof-ml:double:theta-dagger} follow from \cref{eq:lmm:apdx-proof-ml:double:u} for any $n\ge0$:
        \begin{equation*}
        \begin{aligned}
            &\normZZZZ{\widehat\Theta^{\mathrm{ML}}(\bm u_n^{\mathrm{ML}}) - \Theta^{\mathcal G}[\bar u_n]}_p \lesssim \normZZZZ{\widehat\Theta^{\mathrm{ML}}(\bm u_n^{\mathrm{ML}}) - \widehat\Theta^{\mathrm{ML}}(\bm{\bar u}_n^{\mathrm{ML}})}_p + \epsilon + \epsilon\\
            &\quad = \normZZZZ[\Big]{\sum\nolimits_{\ell=0}^L\left(\widehat\Theta^{\mathcal G_\ell}(\bm u_n^{\ell,{\mathrm F}}) - \widehat\Theta^{\mathcal G_{\ell-1}}(\bm u_n^{\ell,{\mathrm C}})\right) - \left(\widehat\Theta^{\mathcal G_\ell}(\bm{\bar u}_n^{\ell,{\mathrm F}}) - \widehat\Theta^{\mathcal G_{\ell-1}}(\bm{\bar u}_n^{\ell,{\mathrm C}})\right)}_p + \epsilon + \epsilon\\
            &\quad= \normZZZZ[\Big]{\sum\nolimits_{\ell=0}^L\left(\widehat\Theta^{\mathcal G_\ell}(\bm u_n^{\ell,{\mathrm F}}) - \widehat\Theta^{\mathcal G_\ell}(\bm{\bar u}_n^{\ell,{\mathrm F}})\right) - \left(\widehat\Theta^{\mathcal G_{\ell-1}}(\bm u_n^{\ell,{\mathrm C}}) - \widehat\Theta^{\mathcal G_{\ell-1}}(\bm{\bar u}_n^{\ell,{\mathrm C}})\right)}_p + \epsilon + \epsilon\\
            &\quad\lesssim \log_2(\epsilon^{-1})\max_{0\le\ell\le L}\bigl(\normZZZZ{\bar u_n^{\ell,{\mathrm F},j} - u_n^{\ell,{\mathrm F},j}}_r + \normZZZZ{\bar u_n^{\ell,{\mathrm C},j} - u_n^{\ell,{\mathrm C},j}}_r\bigr) + \epsilon + \epsilon \lesssim \epsilon\log_2(\epsilon^{-1})^n
        \end{aligned}
        \end{equation*}
        and similarly for $\normZZZZ{\widehat\Theta^{\dagger,{\mathrm{ML}}}(\bm u_n^{\mathrm{ML}}) - \Theta^\dagger[\bar u_n]}_p$. For the first inequality, we used the triangle inequality and \cref{lmm:apdx-proof-ml:bridge-theta,lmm:apdx-proof-sl:aux}. For the second inequality, we used \cref{eq:thm:frame:ml:choices,ass:frame:def:theta}(i). For the last inequality, the terms are bounded due to \cref{eq:lmm:apdx-proof-ml:double:u}.

        We now prove \cref{eq:lmm:apdx-proof-ml:double} by induction. At $n=0$, \cref{eq:lmm:apdx-proof-ml:double:u} -- and therefore \cref{eq:lmm:apdx-proof-ml:double:theta,eq:lmm:apdx-proof-ml:double:theta-dagger} -- are clearly true. If \cref{eq:lmm:apdx-proof-ml:double} is satisfied at time $n$ for all $p\ge2$, then
        \begin{align*}
            &\normZZZZ{u_{n+1}^{\ell,\{{\mathrm F},{\mathrm C}\},j} - \bar u_{n+1}^{\ell,\{{\mathrm F},{\mathrm C}\},j}}_p\\
            &\quad \le c_{\psi}(1+c_{g,1})\normZZZZ{u_n^{\ell,\{{\mathrm F},{\mathrm C}\},j} - \bar u_n^{\ell,\{{\mathrm F},{\mathrm C}\},j}}_r + c_{\psi}\normZZZZ{\widehat\Theta^{\mathrm{ML}}(\bm u_n^{\mathrm{ML}}) - \Theta^{\mathcal G}[\bar u_n]}_r \lesssim \epsilon\log_2(\epsilon^{-1})^n
        \end{align*}
        proves \cref{eq:lmm:apdx-proof-ml:double:u} at time $n+1$ for all $p\ge2$, which again implies \cref{eq:lmm:apdx-proof-ml:double:theta,eq:lmm:apdx-proof-ml:double:theta-dagger}. Here, we used assumptions \ref{ass:frame:def:psi} and \ref{ass:frame:def:G}(i) and the induction hypothesis. Note that \cref{eq:thm:frame:ml:res} corresponds to \cref{eq:lmm:apdx-proof-ml:double:theta-dagger} at $n=N$.
    \end{proof}

\section{Proof of \texorpdfstring{\cref*{lmm:frame:sl}}{the single-level convergence rate}} \label{sec:proof-sl}
    For the proof in this section, we define $\bar u_n^j\coloneqq\bar u_n(\omega^j)$. This is a mean-field particle correlated to single-level particle $u_n^{L,j}$.

    \begin{lemma} \label{lmm:apdx-proof-sl:double}
        For all $n\ge 0$, $1\le j\le J$, and $p \ge 2$, when $\epsilon \le \epsilon_0$, it holds that
        \begin{subequations} \label{eq:lmm:apdx-proof-sl:double}
        \begin{align}
            \label{eq:lmm:apdx-proof-sl:double:u} \normZZZZ{u_n^{L,j} - \bar u_n^j}_p &\lesssim \epsilon,\\
            \label{eq:lmm:apdx-proof-sl:double:theta} \normZZZZ{\widehat\Theta^{\mathcal G_L}(\bm u_n^L) - \Theta^{\mathcal G}[\bar u_n]}_p &\lesssim \epsilon,\\
            \label{eq:lmm:apdx-proof-sl:double:theta-dagger} \normZZZZ{\widehat\Theta^\dagger(\bm u_n^L) - \Theta^\dagger[\bar u_n]}_p &\lesssim \epsilon.
        \end{align}
        \end{subequations}
    \end{lemma}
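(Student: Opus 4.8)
The plan is to mirror the multilevel argument of \cref{lmm:apdx-proof-ml:double}, but with a single forward model $\mathcal G_L$ in place of the telescoping sum, so that the factor $\log_2(\epsilon^{-1})$ never enters and the bound stays clean at $\epsilon$. Concretely, I would first show that the two statistic estimates \cref{eq:lmm:apdx-proof-sl:double:theta,eq:lmm:apdx-proof-sl:double:theta-dagger} at a fixed time $n$ follow from the particle bound \cref{eq:lmm:apdx-proof-sl:double:u} at that same $n$, and then prove \cref{eq:lmm:apdx-proof-sl:double:u} itself by induction on $n$. The bridging object is the auxiliary ensemble $\bm{\bar u}_n^L\coloneqq\{\bar u_n^L(\omega^j)\}_{j=1}^J$, which shares the model $\mathcal G_L$ and the noise $\omega^j$ with $\bm u_n^L$ but is driven by the mean-field statistic.

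To derive \cref{eq:lmm:apdx-proof-sl:double:theta} from \cref{eq:lmm:apdx-proof-sl:double:u}, I would split $\normZZZZ{\widehat\Theta^{\mathcal G_L}(\bm u_n^L) - \Theta^{\mathcal G}[\bar u_n]}_p$ by the triangle inequality into three pieces: $\normZZZZ{\widehat\Theta^{\mathcal G_L}(\bm u_n^L) - \widehat\Theta^{\mathcal G_L}(\bm{\bar u}_n^L)}_p$, $\normZZZZ{\widehat\Theta^{\mathcal G_L}(\bm{\bar u}_n^L) - \Theta^{\mathcal G_L}[\bar u_n^L]}_p$, and $\absZZZZ{\Theta^{\mathcal G_L}[\bar u_n^L] - \Theta^{\mathcal G}[\bar u_n]}$. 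The first is handled by \cref{ass:frame:def:theta}(i) and the Lipschitz bound \cref{ass:frame:def:G}(i), after writing the per-particle error $\normZZZZ{u_n^{L,j} - \bar u_n^{L,j}}_r \le \normZZZZ{u_n^{L,j} - \bar u_n^j}_r + \normZZZZ{\bar u_n^j - \bar u_n^{L,j}}_r$ and bounding the two summands by \cref{eq:lmm:apdx-proof-sl:double:u} and by $2^{-\beta L/2}\lesssim\epsilon$ (\cref{lmm:apdx-proof-sl:bars} with the choice of $L$). The second piece is exactly \cref{ass:frame:def:theta}(iii) with $J\eqsim\epsilon^{-2}$, so it is $c_{\theta,3}J^{-1/2}\lesssim\epsilon$; the third is \cref{lmm:apdx-proof-sl:aux}. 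The identical chain with $\Theta^\dagger$ and $\widehat\Theta^\dagger$ gives \cref{eq:lmm:apdx-proof-sl:double:theta-dagger}.

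For the induction, the base case $n=0$ is immediate because the single-level and mean-field particles share both initial condition and driving noise, so $u_0^{L,j}=\bar u_0^j$. In the step, I would expand $\normZZZZ{u_{n+1}^{L,j} - \bar u_{n+1}^j}_p$ using the dynamics \cref{eq:frame:def:sl,eq:frame:def:mf} and apply \cref{ass:frame:def:psi}, obtaining a particle term, a statistic term, and a forward-model term $\normZZZZ{\mathcal G_L(u_n^{L,j}) - \mathcal G(\bar u_n^j)}_r$; the last is split by \cref{ass:frame:def:G}(i, iii) into a Lipschitz contribution and the approximation error $c_{g,3}2^{-\beta L/2}\lesssim\epsilon$. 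The particle term is bounded by \cref{eq:lmm:apdx-proof-sl:double:u} at time $n$ and the statistic term by \cref{eq:lmm:apdx-proof-sl:double:theta} at time $n$ --- which is precisely why that implication must be proved first --- yielding $\normZZZZ{u_{n+1}^{L,j} - \bar u_{n+1}^j}_p\lesssim\epsilon$. Since \cref{eq:lmm:frame:sl:res} is \cref{eq:lmm:apdx-proof-sl:double:theta-dagger} at $n=N$, this completes the proof. I do not expect a genuine obstacle: the argument is structurally simpler than the multilevel one, and the only points needing care are the usual locality bookkeeping from the preamble to \cref{sec:proof-ml} (keeping the neighbourhoods of $u_0=\bar u_n$ and $\theta_0=\Theta^{\mathcal G}[\bar u_n]$ valid for $\epsilon\le\epsilon_0$) and the ordering of the two implications so the induction does not become circular.
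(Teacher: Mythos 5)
Your proof is correct and follows the same skeleton as the paper's: prove the two statistic bounds as consequences of \cref{eq:lmm:apdx-proof-sl:double:u} at fixed $n$, then close the induction on \cref{eq:lmm:apdx-proof-sl:double:u} via \cref{ass:frame:def:psi} and \cref{ass:frame:def:G}(i, iii); the base case and the induction step are identical to the paper's. The one place you diverge is the decomposition of $\normZZZZ{\widehat\Theta^{\mathcal G_L}(\bm u_n^L) - \Theta^{\mathcal G}[\bar u_n]}_p$: you insert the auxiliary ensemble $\bm{\bar u}_n^L$ (level-$L$ model, mean-field statistic) and split into three pieces, invoking \cref{lmm:apdx-proof-sl:bars} and \cref{lmm:apdx-proof-sl:aux} from the multilevel section, whereas the paper splits into only two pieces through $\widehat\Theta^{\mathcal G}(\bm{\bar u}_n)$ --- the mean-field ensemble with the \emph{exact} model --- handling the model discrepancy inside the Lipschitz bound of \cref{ass:frame:def:theta}(i) via $\normZZZZ{\mathcal G_L(u_n^{L,j}) - \mathcal G(\bar u_n^j)}_r \le c_{g,1}\normZZZZ{u_n^{L,j}-\bar u_n^j}_{r'} + c_{g,3}2^{-\beta L/2}$ and then applying \cref{ass:frame:def:theta}(iii) directly to $\bm{\bar u}_n$. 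Both routes are valid and give the same $\epsilon$ bound; the paper's is shorter and keeps the single-level proof self-contained (no auxiliary particles needed), while yours treats the single-level case as a degenerate instance of the multilevel bridge, which is conceptually uniform but imports two lemmas that are not otherwise needed here. Your attention to the ordering of the two implications and to the locality bookkeeping (both $u_n^{L,j}$ and $\bar u_n^L$ must stay within the $d$-neighbourhood of $\bar u_n$ for $\epsilon\le\epsilon_0$) matches what the paper requires.
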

    \begin{proof}
        We first show that \cref{eq:lmm:apdx-proof-sl:double:theta,eq:lmm:apdx-proof-sl:double:theta-dagger} follow from \cref{eq:lmm:apdx-proof-sl:double:u} for any $n\ge0$:
        \begin{equation*}
        \begin{aligned}
            \normZZZZ{\widehat\Theta^{\mathcal G_L}(\bm u_n^L) - \Theta^{\mathcal G}[\bar u_n]}_p &\le \normZZZZ{\widehat\Theta^{\mathcal G_L}(\bm u_n^L) - \widehat\Theta^{\mathcal G}(\bm{\bar u}_n)}_p + \normZZZZ{\widehat\Theta^{\mathcal G}(\bm{\bar u}_n) - \Theta^{\mathcal G}[\bar u_n]}_p\\
            &\le c_{\theta,1}(\normZZZZ{u_n^{L,j} - \bar u_n^j}_r + \normZZZZ{\mathcal G_L(u_n^{L,j}) - \mathcal G(\bar u_n^j)}_r) + c_{\theta,3}J^{-1/2}\\
            &\le c_{\theta,1}\bigl((1+c_{g,1})\normZZZZ{u_n^{L,j} - \bar u_n^j}_{r'} + c_{g,3}2^{-\beta L/2}\bigr) + c_{\theta,3}J^{-1/2}\lesssim \epsilon\\
        \end{aligned}
        \end{equation*}
        and similarly for $\normZZZZ{\widehat\Theta^\dagger(\bm u_n^L) - \Theta^\dagger[\bar u_n]}_p$. For the second inequality, we used \cref{ass:frame:def:theta}(i, iii). For the third, we used \cref{ass:frame:def:G}(i, iii). For the last inequality, the terms are bounded due to \cref{eq:lmm:frame:sl:choices,eq:lmm:apdx-proof-sl:double:u}.
        
        We now prove \cref{eq:lmm:apdx-proof-sl:double} by induction. At $n=0$, \cref{eq:lmm:apdx-proof-sl:double:u} -- and therefore \cref{eq:lmm:apdx-proof-sl:double:theta,eq:lmm:apdx-proof-sl:double:theta-dagger} -- clearly holds, as $u_0^{L,j} = \bar u_0^j$. If \cref{eq:lmm:apdx-proof-sl:double} is satisfied at time $n$ for all $p\ge2$, then
        \begin{align*}
            \allowdisplaybreaks
            &\normZZZZ{u_{n+1}^{L,j} - \bar u_{n+1}^j}_p = \normZZZZ{\Psi_n^{\mathcal G_L}(u_n^{L,j}, \widehat\Theta^{\mathcal G_L}(\bm u_n^L), \xi_n^j) - \Psi_n^{\mathcal G}(\bar u_n^j, \Theta^{\mathcal G}[\bar u_n], \xi_n^j)}_p\\
            &\qquad\leq c_{\psi}\bigl((1+c_{g,1})\normZZZZ{u_n^{L,j} - \bar u_n^j}_r + c_{g,3}2^{-\beta L/2}\bigr) + c_{\psi}\normZZZZ{\widehat\Theta^{\mathcal G_L}(\bm u_n^L) - \Theta^{\mathcal G}[\bar u_n]}_r \lesssim \epsilon
        \end{align*}
        proves \cref{eq:lmm:apdx-proof-sl:double:u} at time $n+1$ for all $p\ge2$, which again implies \cref{eq:lmm:apdx-proof-sl:double:theta,eq:lmm:apdx-proof-sl:double:theta-dagger}. In the first inequality, we used assumptions \ref{ass:frame:def:psi} and \ref{ass:frame:def:G}(i, iii). In the last, we used the induction hypothesis and \cref{eq:lmm:frame:sl:choices}. Note that \cref{eq:thm:frame:ml:res} corresponds to \cref{eq:lmm:apdx-proof-sl:double:theta-dagger} at $n=N$.
    \end{proof}

\section{Scaling experiments} \label{sec:scale}
    In this section, we set out to corroborate our single-level and multilevel asymptotic cost-to-error bounds with state estimation of an Ornstein--Uhlenbeck process in \cref{sec:scale:c1} and Bayesian inversion of Darcy flow in \cref{sec:scale:c2}. Finally, we discuss and interpret our results in \cref{sec:scale:discussion}. Our code is open-source and can be found in the repository
    \begin{quote}
        \texttt{\url{https://gitlab.kuleuven.be/numa/public/paper-code-mlek}}.
    \end{quote}

    \subsection{State estimation for an Ornstein--Uhlenbeck process} \label{sec:scale:c1}
        The ensemble Kalman filter is extensively studied in \cite{hoelMultilevelEnsembleKalman2016a}, whose MLEnKF algorithm matches our method when applied to the EnKF. Hence, we study the DEnKF instead and apply it to estimate the state in the same Ornstein--Uhlenbeck process
        \begin{equation} \label{eq:scale:c1:ou}
            \mathrm d u = -u\mathrm dt+\sigma \mathrm dW_t, \qquad u(0) = 1,
        \end{equation}
        with $\sigma=0.5$, as in \cite{hoelMultilevelEnsembleKalman2016a}, from noisy measurements $y_n$ at $t_n = n$.
        
        \begin{figure}[htbp]
            \centering
            \def\svgwidth{.7\columnwidth}
            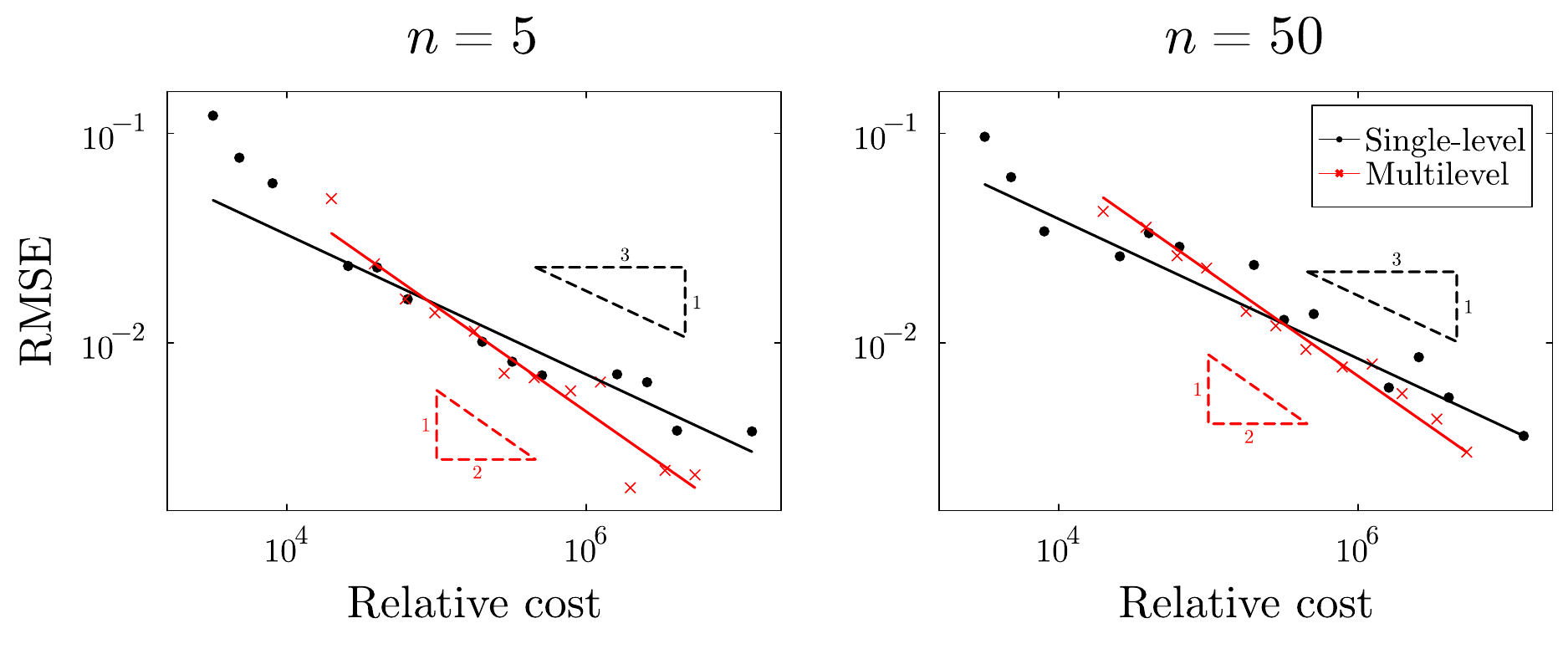
            \vspace{-.2cm}
            \caption{RMSE in function of computational cost for Ornstein--Uhlenbeck with DEnKF\vspace{-.6cm}}
            \label{fig:scale:c1:ou}
        \end{figure}
        
        Let us call $\mathcal G$ the operator that evolves \cref{eq:scale:c1:ou} exactly over a time interval of $\Delta t=1$. Then $u_{n+1} = \mathcal G(u_n)$ and $y_n = u_n + \eta_n$, with measurement noise $\eta_n\sim N(0,0.04I)$. For this example, we assume that evaluating $\mathcal G$ exactly is infeasible and introduce a hierarchy of Milstein discretizations of \cref{eq:scale:c1:ou} at resolutions $\Delta t_\ell = 2^{-\ell}$ as approximations $\{\mathcal G_\ell\}_\ell$. (In reality, $\mathcal G(u) = e^{-1}u + \xi$, with  $\xi\sim N(0, \frac{\sigma^2}{2}(1-e^{-2}))$ integrates \cref{eq:scale:c1:ou} exactly over $\Delta t=1$.)
        
        The cost to evaluate this approximation scales as $\mathrm{Cost}(\mathcal G_\ell) \eqsim 2^\ell$. Therefore, $\gamma=1$; we also have $\beta=2$. For the QoI of the filtering distribution, \pagebreak{}we choose the particle mean. Our gold standard is a highly accurate approximation of the expected value of the mean-field particle distribution obtained by averaging the means of $10$ large-scale ($J=10^4$) single-level simulations with the exact forward model. We consider the root-mean-square error (RMSE) between $10$ independent ensemble means and this gold standard. In \cref{fig:scale:c1:ou}, the RSME is plotted against the computational cost (the equivalent number of $\mathcal G_0$-evaluations) for a number of both single-level and multilevel experiments. The values of $L$ and $J_\ell$ are chosen according to \cref{eq:thm:frame:ml:choices}. From $\beta$ and $\gamma$, by \cref{thm:frame:ml}, we predict $\mathrm{RMSE}\lesssim (\mathrm{Cost}^\mathrm{SL})^{-1/3}$ for single-level and $\mathrm{RMSE}\lesssim(\mathrm{Cost}^\mathrm{ML})^{-1/2}$ for multilevel. The figure shows that both schemes are asymptotically close to these rates.

    \subsection{Bayesian inversion for Darcy flow} \label{sec:scale:c2}
        We now apply EKI and EKS to the inversion of a Darcy flow problem.

        \begin{remark}
            Some ensemble Kalman methods for Bayesian inversion, such as EKI and EKS, are based on the mean-field particle distribution for $N\rightarrow\infty$. Hence, we add a dependence of the number of time steps $N$ on the parameter $\epsilon$ (which is proportional to the error): $N = N(\epsilon)$. In this subsection, we will assume that $\normZZZZ{\hat\theta_N^{\dagger,\mathrm{ML}} - \bar \theta^\dagger_N}_p \lesssim \epsilon$ without the $\log_2(\epsilon^{-1})^N$ factor (see \cref{rem:conv:ml:factor}) and that the constant implicit in $\lesssim$ is $N$-independent. Our numerical investigations support this second assumption. We note that the theoretical bounds in multilevel particle filters \cite{jasraMultilevelParticleFilters2017} also allow time-dependent errors, while numerical tests show time-uniformity.
        \end{remark}
        
        Darcy flow is a classical test problem in Bayesian inversion (see, e.g., \cite{garbuno-inigoInteractingLangevinDiffusions2020a,huangIteratedKalmanMethodology2022b}). On the two-dimensional spatial domain $[0, 1]^2$, our forward model $\mathcal G(u)$ computes the~map~of the permeability field $a(x, u)$ of a porous medium to the pressure field $p(x)$ that satisfies
        \begin{equation} \label{eq:scale:c2:darcy}
            -\nabla\cdot(a(x, u)\nabla p(x)) = f(x) \qquad \text{and} \qquad p(x) = 0 \text{ if $x\in\partial[0, 1]^2$.}
        \end{equation}
        We set $f(x) = 1000\exp(x_1+x_2)$ and model $a(x, u)$ as a log-normal random field with covariance $(-\Delta+\tau^2)^{-d}$ with $d=2$ and $\tau=3$. This corresponds to a standard normal prior on the parameters $u_k$, the $d_u=16$ coefficients with largest eigenvalues in the Karhunen--Lo\`eve expansion
        \begin{equation}
            \log a(x, u) = \sum\nolimits_{k\in \mathbb N^2\backslash\{(0, 0)\}} u_k\sqrt{\lambda_k}\phi_k(x),
        \end{equation}
        with eigenpairs $\lambda_k = (\pi^2\|k\|_2^2 + \tau^2)^{-d}$ and $\phi_k(x) = c_k\cos(\pi k_1x_1)\cos(\pi k_2x_2)$, with $c_k=\sqrt2$ if $k_1k_2=0$ \pagebreak{}and $c_k=2$ otherwise (see \cite{huangIteratedKalmanMethodology2022b}). The output of the model consists of $p(x)$ evaluated on 49 equispaced points and the added noise has covariance $\Gamma=0.01I$. For EKS, we use a zero-centered Gaussian prior with covariance $\Gamma_0=I$.
        
        To stabilize and accelerate both the single- and the multilevel methods, we use the adaptive time steps introduced in \cite{kovachkiEnsembleKalmanInversion2019a} and also used in \cite{garbuno-inigoInteractingLangevinDiffusions2020a}. We compute them for each level independently, and propagate the whole ensemble with the minimal value. Although the adaptive versions of both methods converge exponentially \cite{garbuno-inigoInteractingLangevinDiffusions2020a}, we choose the conservative $N(\epsilon) \eqsim \epsilon^{-\delta}$ (with $\delta=0.1$).
        
        Solving \cref{eq:scale:c2:darcy} via central finite differences on a grid with step size $h_\ell \eqsim 2^{(13+\ell)/4}$ yields a hierarchy of forward models $\{\mathcal G_\ell\}_\ell$ with $\beta=1$ and $\gamma = 1/2$. \Cref{fig:scale:c2:darcy} shows good correspondence between experimental results and the expected rates.
        \begin{figure}[t]
            \centering
            \def\svgwidth{.7\columnwidth}
            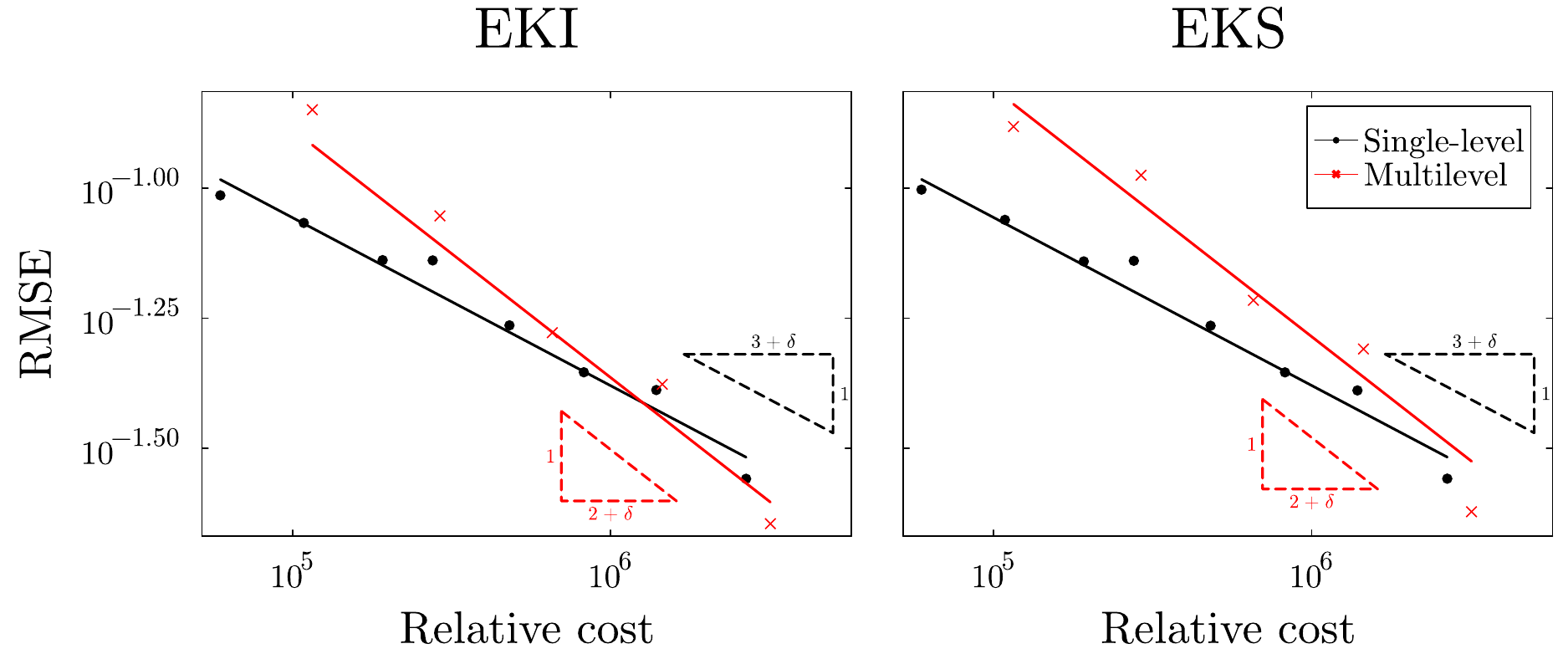
            \caption{RMSE in function of computational cost for Bayesian inversion of Darcy flow.\vspace{-.2cm}}
            \label{fig:scale:c2:darcy}
        \end{figure}

    \subsection{Discussion} \label{sec:scale:discussion}
        \Cref{lmm:frame:sl,thm:frame:ml} imply that our multilevel algorithm asymptotically enjoys faster convergence than the more straightforward single-level algorithm for a fixed number of time steps. The numerical scaling tests conducted in \cref{sec:scale:c1,sec:scale:c2} demonstrate that these rates are accurate. We stress that our theory only provides asymptotic rates and does not result in guidelines for selecting the constants in \cref{eq:lmm:frame:sl:choices,eq:thm:frame:ml:choices}.

        Changing these constants should, in general, shift the convergence graphs without changing their slopes. We cannot currently compare the non-asymptotic performance of single- and multilevel simulation in a meaningful way. However, the convergence rates can be compared, and those are the focus of this article. These limitations are also present in the experiments that are performed in \cite{chernovMultilevelEnsembleKalman2021a,hoelMultilevelEnsembleKalman2016a}. This important aspect of multilevel ensemble Kalman methods would be an interesting avenue for future research.

\section{Conclusions} \label{sec:concl}
    We have proposed a framework for simulating ensemble Kalman methods, including EnKF, DEnKF, EKI, and EKS. We consider the setting where the forward model $\mathcal G$ is intractable and is replaced by an approximation hierarchy $\{\mathcal G_\ell\}_\ell$.

    For methods in the framework, we have described and analyzed two algorithms that allow a numerical approximation of the particle density: a standard Monte Carlo simulation and a generalization of the multilevel Monte Carlo ensemble Kalman filter in \cite{chernovMultilevelEnsembleKalman2021a,hoelMultilevelEnsembleKalman2016a}, which we\pagebreak{} call the \emph{single-ensemble} multilevel algorithm. It uses a single, globally coupled ensemble whose particles use different forward models $\mathcal G_\ell$. The MLMC methodology is then used to estimate the interaction term at each time step.

    We formulated \cref{ass:frame:def:G,ass:frame:def:psi,ass:frame:def:theta}, under which convergence results are shown in \cref{thm:frame:ml,lmm:frame:sl}. These bounds suggest that single-ensemble MLMC asymptotically outperforms standard Monte Carlo when simulating until a fixed time step $N$. Experiments suggest that the convergence rates implied by the bounds are accurate.

    Open questions remain. Convergence bounds with explicit dependence on $N$ and potentially without the factor $\log_2(\epsilon^{-1})^N$ are crucial to better understand non-asymptotic properties of single- and multilevel ensemble Kalman methods. This is especially important for EKI and EKS, where the number of time steps depends on the desired accuracy. It would also be instructive to develop multiple-ensemble MLMC strategies for ensemble Kalman methods, such as the one proposed by \cite{hoelMultilevelEnsembleKalman2020b} for the EnKF, and compare to our method. In addition, this paper is a first step towards multilevel simulation methods for other particle-based methods for Bayesian inversion, such as consensus-based methods \cite{carrilloConsensusBasedSampling2022a,pinnauConsensusbasedModelGlobal2017}, whose interaction terms are more complex but still have cost $\mathcal O(J)$.

\appendix
\section{Verification of the assumptions for specific methods} \label{sec:apdx-proofs}
    This appendix verifies \cref{ass:frame:def:psi,ass:frame:def:theta} for the dynamics of the (deterministic) ensemble Kalman filter, ensemble Kalman inversion, and ensemble Kalman sampling.

    \begin{lemma} \label{lmm:apdx-proofs:theta:E}
        When the parameter $\Theta^g[\cdot]$ is the expectation $\mathbb E[\cdot]$ and the sample statistic $\widehat\Theta^g(\cdot)$ is the sample mean $E(\cdot)$, \cref{ass:frame:def:theta} is satisfied.
    \end{lemma}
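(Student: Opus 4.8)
The plan is to read the statement with $\Theta^g[u]=\mathbb E[g(u)]$ and $\widehat\Theta^g(\bm u)=\frac1J\sum_{j=1}^J g(u^j)$, i.e.\ the sample mean of the forward-evaluated particles (the QoI clause of \cref{ass:frame:def:theta} for $\Theta^\dagger,\widehat\Theta^\dagger$ follows from the same argument, the identity forward model recovering the plain particle mean). I would then verify the four items of \cref{ass:frame:def:theta} one at a time. The only ingredients are the triangle inequality, norm ordering (\cref{pr:intro:notation:ord}), the uniform bound $\normZZZZ{\mathcal G_\ell(u_1)}_p\le c_{g,2}$ from \cref{ass:frame:def:G}(ii), and the Marcinkiewicz--Zygmund inequality (\cref{pr:intro:notation:mz}); throughout I take the locality radius $d$ and index $r$ compatibly with those of \cref{ass:frame:def:G} and enlarge $r\ge p$ freely by monotonicity of the $p$-norm in its index.

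Items (iv) and (i) are immediate. For (iv), $\absZZZZ{\Theta^g[u_1]}=\absZZZZ{\mathbb E[g(u_1)]}\le\normZZZZ{g(u_1)}_p\le c_{g,2}$ by \cref{pr:intro:notation:ord} and \cref{ass:frame:def:G}(ii), while $\normZZZZ{\widehat\Theta^g(\bm u_1)}_p\le\frac1J\sum_j\normZZZZ{g(u_1^j)}_p\le c_{g,2}$, since each $u_1^j$ is distributed as $u_1$ and so meets the locality condition. For (i), the triangle inequality gives $\normZZZZ{\widehat\Theta^{g_1}(\bm u_1)-\widehat\Theta^{g_2}(\bm u_2)}_p\le\frac1J\sum_j\normZZZZ{g_1(u_1^j)-g_2(u_2^j)}_p=\normZZZZ{g_1(u_1)-g_2(u_2)}_p$, using that the $j$-th pair $(u_1^j,u_2^j)$ realizes the joint law of $(u_1,u_2)$; this is of the required form with $c_{\theta,1}=1$, the term $\normZZZZ{u_1-u_2}_r$ being slack.

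The two rate-carrying items, (iii) and (ii), rest on Marcinkiewicz--Zygmund. For (iii) I would write $\widehat\Theta^g(\bm u_1)-\Theta^g[u_1]=\frac1J\sum_j(g(u_1^j)-\mathbb E[g(u_1^j)])$ as a normalized sum of i.i.d.\ zero-mean RVs, so that \cref{pr:intro:notation:mz} yields $\normZZZZ{\widehat\Theta^g(\bm u_1)-\Theta^g[u_1]}_p\le c_pJ^{-1/2}\normZZZZ{g(u_1)-\mathbb E[g(u_1)]}_p\le 2c_pc_{g,2}J^{-1/2}$, i.e.\ $c_{\theta,3}=2c_pc_{g,2}$. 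Item (ii) is the crux: setting $X_j\coloneqq g_1(u_1^j)-g_2(u_2^j)$, the coupled differences are i.i.d.\ with mean $\mathbb E[X_j]=\Theta^{g_1}[u_1]-\Theta^{g_2}[u_2]$, so the left-hand side equals $\normZZZZ{\frac1J\sum_j(X_j-\mathbb E[X_j])}_p$, and \cref{pr:intro:notation:mz} bounds it by $c_pJ^{-1/2}\normZZZZ{X_1-\mathbb E[X_1]}_p\le 2c_pJ^{-1/2}\normZZZZ{g_1(u_1)-g_2(u_2)}_p$, the claimed bound with $c_{\theta,2}=2c_p$.

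The main obstacle is item (ii): the point is not merely to obtain an $\mathcal O(J^{-1/2})$ bound but to make the constant scale with the \emph{coupled} distance $\normZZZZ{g_1(u_1)-g_2(u_2)}_r$, which is exactly the variance reduction that drives the telescoping in \cref{eq:frame:def:thetaml}. This hinges on applying Marcinkiewicz--Zygmund to the \emph{centered} coupled differences $X_j-\mathbb E[X_j]$, so that the prefactor is the $L^p$-distance of $g_1(u_1)$ and $g_2(u_2)$ rather than their individual magnitudes; it therefore crucially uses that the pairs $(u_1^j,u_2^j)$ are i.i.d., as the hypotheses of (ii)--(iii) guarantee. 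The remaining work is bookkeeping: intersecting the locality neighborhoods and aligning the norm indices of \cref{ass:frame:def:G,ass:frame:def:theta} so that \cref{ass:frame:def:G}(ii) may be invoked on the neighborhood prescribed here.
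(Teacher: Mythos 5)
Your proof is correct and takes essentially the same route as the paper's: item (i) via the triangle inequality, items (ii)--(iii) via the Marcinkiewicz--Zygmund inequality applied to the centered coupled differences (which is indeed the step that produces the coupled-distance prefactor), and (iv) via norm ordering. The only cosmetic differences are that you work with the forward-composed mean $\frac1J\sum_{j} g(u^j)$ while the paper's proof is written for the plain particle mean $E(\cdot)$ (your version subsumes it by taking $g$ to be the identity), and that you prove (iii) directly from Marcinkiewicz--Zygmund rather than deducing it from (ii) by setting $u_2\sim\delta_0$ as the paper does.
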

    \begin{proof}
        We have $\normZZZZ{\widehat\Theta^g(\bm u_1) - \widehat\Theta^g(\bm u_2)}_p = \normZZZZ{\frac1J\sum\nolimits_{j=1}^J(u_1^j-u_2^j)}_p \le \normZZZZ{u_1-u_2}_p$ for any $p\ge2$, for \cref{ass:frame:def:theta}(i). For (ii), we have that $\normZZZZ{E(\bm u_1) - E(\bm u_2) - (\mathbb E[u_1] - \mathbb E[u_2])}_p$ equals $\normZZZZ{E((\bm u_1 - \bm u_2) - \mathbb E[u_1 - u_2])}_p$. Since a sample mean is an unbiased estimator, we obtain $\normZZZZ{E((\bm u_1 - \bm u_2) - \mathbb E[u_1 - u_2])}_p \le 2c_{\kern-1ptp}J^{-1/2}\normZZZZ{u_1 - u_2}_p$ with \cref{pr:intro:notation:mz,pr:intro:notation:ord}. Then, (iii) follows from (ii) by letting $u_2 \sim \delta_0$, then $\mathbb E[u_2] = 0$ and $c_{\theta,2}$ is $d$-independent. Finally, (iv) can be checked using \cref{pr:intro:notation:ord,pr:intro:notation:mz}.
    \end{proof}
    \begin{lemma}
        When the parameter $\Theta^g[\cdot]$ is the (cross-)covariance $\mathbb C[\cdot]$, $\mathbb C[g(\cdot)]$, or $\mathbb C[\cdot, g(\cdot)]$, and the sample statistic $\widehat\Theta^g(\cdot)$ is the corresponding sample (cross-)covariance $C(\cdot)$, $C(g(\cdot))$, or $C(\cdot, g(\cdot))$, \cref{ass:frame:def:theta} is satisfied.
    \end{lemma}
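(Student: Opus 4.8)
The plan is to treat the cross-covariance $\Theta^g[\cdot]=\mathbb{C}[\cdot,g(\cdot)]$ as the general case, since the variance $\mathbb{C}[\cdot]$ and the covariance $\mathbb{C}[g(\cdot)]$ arise from it by taking both ``legs'' equal to $u$ or to $g(u)$; in the cross case the two legs are the perfectly coupled ensembles $\bm u=\{u^j\}_j$ and $\bm v=\{g(u^j)\}_j$, so that the outer products $u^j(v^j)^{\top}$ are i.i.d.\ across $j$. The central idea is to reduce everything to the already-proved mean case of \cref{lmm:apdx-proofs:theta:E} through the algebraic identity
\[
    C(\bm u,\bm v)=P(\bm u,\bm v)-E(\bm u)E(\bm v)^{\top},\qquad P(\bm u,\bm v)\coloneqq\tfrac1J\sum\nolimits_{j=1}^J u^j(v^j)^{\top},
\]
up to the usual $J/(J-1)$ normalisation, so that $P$ is itself a sample mean --- of the i.i.d.\ variable $uv^{\top}$ --- to which \cref{lmm:apdx-proofs:theta:E} applies verbatim. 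Throughout I would bound any outer product with \cref{pr:intro:notation:holder}, as in $\normZZZZ{uv^{\top}}_p\le\normZZZZ{u}_{2p}\normZZZZ{v}_{2p}$, and control the ``inert'' factors by constants: the locality condition $\normZZZZ{u-u_0}_r\le d$ bounds $\normZZZZ u_r$, while $\normZZZZ{v}_{2p}=\normZZZZ{g(u)}_{2p}$ is bounded by \cref{ass:frame:def:G}(ii), upon choosing $r\ge 2p$.

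For parts (i) and (iv) of \cref{ass:frame:def:theta} the work is routine bilinear bookkeeping. I would expand differences by telescoping, e.g.\ $u_1 v_1^{\top}-u_2 v_2^{\top}=(u_1-u_2)v_1^{\top}+u_2(v_1-v_2)^{\top}$, apply \cref{pr:intro:notation:holder}, and bound the inert factors by the constants above; the $P$-part is then controlled by the mean-case bound (i), and the $E(\bm u)E(\bm v)^{\top}$-part by applying (i) to each sample mean together with another use of \cref{pr:intro:notation:holder}. Boundedness (iv) of both $\mathbb{C}[u,v]$ and $C(\bm u,\bm v)$ follows the same way, using \cref{pr:intro:notation:mz} to bound $\normZZZZ{E(\bm u)}_{2p}$.

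The crux is part (ii), since the estimator is \emph{bilinear} in the ensemble and the Marcinkiewicz--Zygmund inequality cannot be invoked directly as for the mean. I would split the target into the $P$-part and the $EE^{\top}$-part. For the former, the mean-case bound (ii) applied to $uv^{\top}$ gives $\lesssim J^{-1/2}\normZZZZ{u_1v_1^{\top}-u_2v_2^{\top}}_p$, which by the telescoping-plus-Hölder step is $\lesssim J^{-1/2}(\normZZZZ{u_1-u_2}_r+\normZZZZ{v_1-v_2}_r)$, exactly as required. The delicate part is the product-of-means term: writing $a_i=E(\bm u_i)$, $b_i=E(\bm v_i)$, $\alpha_i=\mathbb E[u_i]$, $\beta_i=\mathbb E[v_i]$, I would decompose $(a_1b_1^{\top}-a_2b_2^{\top})-(\alpha_1\beta_1^{\top}-\alpha_2\beta_2^{\top})$ into four pieces, each a product of one factor carrying the $J^{-1/2}$ rate and one carrying the perturbation. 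The rate comes either from a first-order deviation $E(\bm u_i)-\mathbb E[u_i]$ (bounded by \cref{pr:intro:notation:mz}) or from a centered difference $E(\bm u_1-\bm u_2)-\mathbb E[u_1-u_2]$ (again \cref{pr:intro:notation:mz}, now scaled by $\normZZZZ{u_1-u_2}_p$), while the perturbation factor comes from $\mathbb E[u_1-u_2]$ or $E(\bm u_1-\bm u_2)$, bounded by $\normZZZZ{u_1-u_2}_r$ via \cref{pr:intro:notation:ord} or the mean-case bound (i). Making \emph{both} the $J^{-1/2}$ factor and the perturbation appear simultaneously in every one of the four pieces is the main obstacle; the $J/(J-1)$ normalisation (equivalently, the $O(J^{-1})$ bias) contributes only an $O(J^{-1})$ multiple of the population-Lipschitz bound from \cref{rem:frame:def:theta}, which is absorbed into the $J^{-1/2}$ term.

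Finally, I would deduce part (iii) from (ii) exactly as in \cref{lmm:apdx-proofs:theta:E}, by letting $u_2\sim\delta_0$: a constant ensemble has vanishing sample and population covariance, so $C(\bm u_2,\bm v_2)=0=\mathbb{C}[u_2,v_2]$, and (ii) then bounds $\normZZZZ{C(\bm u_1,\bm v_1)-\mathbb{C}[u_1,v_1]}_p$ by $c_{\theta,2}J^{-1/2}(\normZZZZ{u_1}_r+\normZZZZ{v_1-g_2(0)}_r)$, whose bracket is a constant by \cref{ass:frame:def:G}(ii). The specialisations to $\mathbb{C}[\cdot]$ and $\mathbb{C}[g(\cdot)]$ follow by substituting $\bm v=\bm u$ or $\bm v=g(\bm u)$ throughout, which collapses the two perturbation terms into the single one appearing in \cref{ass:frame:def:theta}.
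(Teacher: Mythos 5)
Your argument is sound, but it takes a genuinely different route from the paper: the paper does not carry out these estimates itself, instead delegating parts (i) and (ii) to the proofs of Lemmas 3.9 and 3.8 of Hoel et al.\ (the MLEnKF paper), and reducing the three covariances to submatrices of the joint covariance $\mathbb C[(\cdot,g(\cdot))]$ of the stacked vector, whereas you take the cross-covariance as the master case and recover the other two by equating the legs. Your self-contained path --- writing $C(\bm u,\bm v)=\tfrac{J}{J-1}\bigl(P(\bm u,\bm v)-E(\bm u)E(\bm v)^{\mathstrut\scriptstyle\top}\bigr)$, handling the $P$-part by applying \cref{lmm:apdx-proofs:theta:E} to the i.i.d.\ outer products, and handling the product-of-means part by expanding $E(\bm u_i)=\mathbb E[u_i]+\delta_i$, $E(\bm v_i)=\mathbb E[v_i]+\epsilon_i$ so that every resulting term carries both a Marcinkiewicz--Zygmund factor $J^{-1/2}$ (from $\delta_i$, $\epsilon_i$, or their centered differences) and a perturbation factor (from $\|u_1-u_2\|_r$ or $\|v_1-v_2\|_r$) --- is exactly the content of the cited external lemmas, and your identification of the product-of-means term as the crux is accurate; the $O(J^{-1})$ bias from the normalisation is indeed absorbed as you say. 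The only points to tighten in a full write-up are the bookkeeping of the decomposition (the expansion naturally yields six product terms rather than four, though each has the required structure) and the application of part (ii) with $u_2\sim\delta_0$ in deriving (iii), where one should take $g_2\equiv 0$ so that the bracket reduces to quantities controlled by the locality condition and \cref{ass:frame:def:G}(ii) --- the same $d$-independence caveat the paper invokes in the mean case. What your approach buys is a proof readable without consulting the external reference; what the paper's buys is brevity and reuse of results already verified elsewhere.
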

    \begin{proof}
        First consider the covariance $\mathbb C[(\cdot, g(\cdot))]$ and its corresponding sample covariance. For that estimator, the proof of (i) is contained in that of \cite[Lemma 3.9]{hoelMultilevelEnsembleKalman2016a}; the proof of (ii) in that of \cite[Lemma 3.8]{hoelMultilevelEnsembleKalman2016a}. Then, (iii) and (iv) can be checked as for \cref{lmm:apdx-proofs:theta:E}. The covariances in this lemma are submatrices of $\mathbb C[(\cdot, g(\cdot))]$; hence,~the~lemma~follows.
    \end{proof}

    \begin{remark}[Covariance matrices] \label{rem:apdx-proofs:ipm:cov}
        Dynamics such as the EnKF and EKS use covariance matrices for interaction. A mean-field or single-level sample covariance is always positive semi-definite, ensuring that operations such as matrix square roots are well-defined. A \emph{multilevel} sample covariance matrix (\ref{eq:frame:def:thetaml}), on the other hand, might have negative eigenvalues. In \cite{chernovMultilevelEnsembleKalman2021a,hoelMultilevelEnsembleKalman2016a}, this is avoided by setting all negative eigenvalues to zero in a preprocessing step. We will follow this approach and, to this end, define the operator
        \begin{equation} \label{eq:apdx-proofs:I+}
            I^+(M) = \sum\nolimits_{\lambda_k\ge0}\lambda_kq_kq_k^{\mathstrut\scriptstyle{\top}} \qquad \text{with $\{(\lambda_k, q_k)\}_k$ the eigenpairs of $M$},
        \end{equation}
        which can be freely incorporated into existing dynamics where needed. Indeed, when applied to sample covariance matrices in the single-level algorithm, it is the identity operator. In the multilevel context, in ensures positive semi-definiteness.
    \end{remark}

    \begin{lemma} \label{lmm:apdx-proofs:enkf}
        The ensemble Kalman filter and non-adaptive ensemble Kalman inversion fit into the framework of \cref{sec:frame:sl-mf} and satisfy \cref{ass:frame:def:psi}.
    \end{lemma}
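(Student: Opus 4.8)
The plan is to treat both methods in two stages: first exhibit the framework ingredients of \cref{sec:frame:sl-mf}, then establish the local Lipschitz bound of \cref{ass:frame:def:psi}. For the EnKF of \cref{ex:intro:ipm:enkf} I would take the single statistic $\Theta^g[u] = \mathbb C[g(u)]$ and set
\begin{equation*}
    \Psi_n^g(u, \theta, \xi) = g(u) - K(\theta)Hg(u) + K(\theta)\bigl(y_{n+1} + \sqrt\Gamma\,\xi\bigr), \qquad K(\theta) = I^+(\theta)H^\top\bigl(HI^+(\theta)H^\top + \Gamma\bigr)^{-1},
\end{equation*}
where $I^+$ is the PSD-cone projection of \cref{rem:apdx-proofs:ipm:cov}, which is the identity on the genuine (sample and mean-field) covariances, so that \cref{eq:ex:intro:ipm:enkf} is recovered. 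For non-adaptive EKI (i.e.\ with $\{\tau_n\}_n$ a fixed deterministic sequence, so that the step size never depends on the ensemble) I would take the pair $\Theta^g[u] = (\mathbb C[u, g(u)], \mathbb C[g(u)])$ and
\begin{equation*}
    \Psi_n^g\bigl(u, (\theta_1, \theta_2), \xi\bigr) = u + \tau_n\theta_1\bigl(\tau_nI^+(\theta_2) + \Gamma\bigr)^{-1}\bigl(y - g(u) + \sqrt{\Gamma/\tau_n}\,\xi\bigr),
\end{equation*}
reproducing \cref{eq:ex:intro:ipm:eki}. This settles the ``fitting'' part; the content of the lemma is the Lipschitz estimate.

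The key observation for \cref{ass:frame:def:psi} is that the matrix factor multiplying the data is a bounded, locally Lipschitz function of $\theta$. Since $I^+(\theta)\succeq0$ we have $HI^+(\theta)H^\top + \Gamma\succeq\Gamma\succ0$ pointwise, whence the uniform bound $\absZZZZ[\big]{\bigl(HI^+(\theta)H^\top + \Gamma\bigr)^{-1}} \le \lambda_{\min}(\Gamma)^{-1}$; the resolvent identity $A^{-1} - B^{-1} = A^{-1}(B - A)B^{-1}$ then gives
\begin{equation*}
    \absZZZZ[\big]{\bigl(HI^+(\theta_1)H^\top + \Gamma\bigr)^{-1} - \bigl(HI^+(\theta_2)H^\top + \Gamma\bigr)^{-1}} \le \lambda_{\min}(\Gamma)^{-2}\absZZZZ{H}^2\absZZZZ[\big]{I^+(\theta_1) - I^+(\theta_2)}.
\end{equation*}
Combining this with $\absZZZZ{I^+(\theta_1) - I^+(\theta_2)} \lesssim \absZZZZ{\theta_1 - \theta_2}$ (Lipschitz continuity of the cone projection), $\absZZZZ{I^+(\theta)}\le\absZZZZ{\theta}$, and one further add-and-subtract shows that $\theta\mapsto K(\theta)$ and its EKI analogue are bounded and locally Lipschitz, with the locality condition $\normZZZZ{\theta - \theta_0}_r\le d$ around the genuine covariance $\theta_0$ keeping $\normZZZZ{\theta}_r$ bounded.

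With this in hand I would expand, for the EnKF,
\begin{align*}
    \Psi_n^{g_1}(u_1, \theta_1, \xi) - \Psi_n^{g_2}(u_2, \theta_2, \xi) &= \bigl[g_1(u_1) - g_2(u_2)\bigr] - \bigl[K(\theta_1)Hg_1(u_1) - K(\theta_2)Hg_2(u_2)\bigr]\\
    &\quad + \bigl[K(\theta_1) - K(\theta_2)\bigr]\bigl(y_{n+1} + \sqrt\Gamma\,\xi\bigr),
\end{align*}
and repeatedly add and subtract intermediate terms so that every summand becomes a product of one bounded factor and one of the three differences $\normZZZZ{u_1 - u_2}_r$, $\normZZZZ{\theta_1 - \theta_2}_r$, or $\normZZZZ{g_1(u_1) - g_2(u_2)}_r$ permitted on the right-hand side of \cref{ass:frame:def:psi}; the EKI expansion is entirely analogous. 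Because several factors are random matrices, I would split the products in the $p$-norm via the generalized H\"older inequality (\cref{pr:intro:notation:holder}) and control the resulting higher-order norms through the boundedness of $\mathcal G_\ell$ (\cref{ass:frame:def:G}(ii)) and of the statistics (\cref{ass:frame:def:theta}(iv)). The main obstacle is precisely this Kalman-gain analysis: the matrix inverse must be shown well-defined and uniformly bounded---which hinges on $\Gamma\succ0$ together with the $I^+$-enforced positive semi-definiteness---and locally Lipschitz under only an $L^r$ bound on the random argument $\theta$, after which assembling the final estimate is routine bookkeeping with the triangle and H\"older inequalities.
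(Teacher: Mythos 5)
Your proposal is correct and follows essentially the same route as the paper: identify the statistic and the map $\Psi_n^g$ for each method, expand the difference of $\Psi$'s, split products with the generalized H\"older inequality, and reduce everything to boundedness and local Lipschitz continuity of the Kalman gain in $\theta$. The only differences are cosmetic: you derive the gain estimates from first principles (uniform bound via $\Gamma\succ0$, resolvent identity, non-expansiveness of $I^+$) where the paper cites Lemmas 3.3--3.4 of \cite{hoelMultilevelEnsembleKalman2016a}, and you treat EKI directly with its own $\Psi_n^g$ and paired statistic rather than invoking, as the paper does, that EKI is an EnKF on an enlarged state space.
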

    \begin{proof}
        We tackle the EnKF first. In the notation of our framework, we have $\widehat\Theta^{g}(\cdot)=C(g(\cdot))$ and the EnKF uses the functions $\Psi_n^g(u, \theta, \xi) = g(u) + K(y_{n+1}-Hg(u)+\sqrt\Gamma\xi)$, with $K = \theta H^{\mathstrut\scriptstyle{\top}}(HI^+(\theta) H^{\mathstrut\scriptstyle{\top}} + \Gamma)^{-1}$ and $I^+$ from \cref{rem:apdx-proofs:ipm:cov}. By \cref{pr:intro:notation:holder},
        \begin{equation} \label{eq:lmm:apdx-proofs:enkf:step1}
        \begin{aligned}
            \normZZZZ{\Psi_n^{g_1}(u_1, \theta_1, \xi) - \Psi_n^{g_2}(u_2, \theta_2, \xi)}_p &\le (1 + \absZZZZ{K_1}\absZZZZ{H})\normZZZZ{g_1(u_1)-g_2(u_2)}_p \\&\;\,+ \normZZZZ{K_1-K_2}_{2p}\normZZZZ{y_{n+1}-Hg_2(u_2)+\sqrt\Gamma\xi}_{2p}.
        \end{aligned}
        \end{equation}
        Now notice that $\absZZZZ{K_1}\le\absZZZZ{\theta_1}\absZZZZ{H}/\gamma_{\min}$, where $\gamma_{\min}>0$ is $\Gamma$'s smallest eigenvalue. In addition, \cite[Lemmas 3.3 and 3.4]{hoelMultilevelEnsembleKalman2016a} together prove, in our notation, the bound $\normZZZZ{K_1-K_2}_{2p}\le\absZZZZ{H}/\gamma_{\min}(1 + 2\absZZZZ{K_1}\absZZZZ{H})\normZZZZ{\theta_1-\theta_2}_{2p}$. Thus \cref{eq:lmm:apdx-proofs:enkf:step1} is bounded by
        \begin{equation*}
            \allowdisplaybreaks
            \Bigl(1+\frac{\absZZZZ H^2}{\gamma_{\min}}\absZZZZ{\theta_1}\Bigr)\normZZZZ{g_1(u_1)-g_2(u_2)}_p + \Bigl(\frac{\absZZZZ H}{\gamma_{\min}}+2\frac{\absZZZZ H^3}{\gamma^2_{\min}}\absZZZZ{\theta_1}\Bigr)\normZZZZ{y_{n+1}-Hg_2(u_2)+\sqrt\Gamma\xi}_{2p}\normZZZZ{\theta_1-\theta_2}_{2p};
        \end{equation*}
        bounding factors with the locality conditions (e.g., $\absZZZZ{\theta_1}\le\normZZZZ{\theta_0}+d$) and \cref{ass:frame:def:G}(ii) concludes the proof for EnKF. The EKI dynamics are an instance of the EnKF dynamics, with an enlarged state space \cite{iglesiasEnsembleKalmanMethods2013a}. The proof still applies.
    \end{proof}
    In the rest of this section, the use of \cref{pr:intro:notation:holder} and the final step of using the locality conditions and \cref{ass:frame:def:G}(ii) will be left implicit.

    \begin{lemma} \label{lmm:apdx-proofs:denkf}
        The deterministic ensemble Kalman filter fits into the framework of \cref{sec:frame:sl-mf} and satisfies \cref{ass:frame:def:psi}.
    \end{lemma}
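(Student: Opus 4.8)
The plan is to mirror the EnKF argument of \cref{lmm:apdx-proofs:enkf}, since the DEnKF differs only in how the innovation is perturbed: the stochastic term $\sqrt\Gamma\,\xi$ is replaced by the deterministic $\tfrac12 H(\mathcal G(u_n^j) - E(\mathcal G(\bm u_n)))$. First I would put \cref{eq:ex:intro:ipm:denkf} into the form \cref{eq:frame:def:sl}. Expanding the update and collecting terms gives
\begin{equation*}
    u_{n+1}^j = \bigl(I - \tfrac12 K H\bigr)\mathcal G(u_n^j) + K\bigl(y_{n+1} - \tfrac12 H\, E(\mathcal G(\bm u_n))\bigr),
\end{equation*}
so the interaction now needs both moments: I would take the tuple statistic $\widehat\Theta^g(\bm u) = (C(g(\bm u)), E(g(\bm u)))$, which satisfies \cref{ass:frame:def:theta} by \cref{lmm:apdx-proofs:theta:E} and the preceding covariance lemma (the tuple norm being the sum of component norms). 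Writing $\theta = (\theta^C, \theta^m)$, the corresponding map is
\begin{equation*}
    \Psi_n^g(u, \theta, \xi) = \bigl(I - \tfrac12 K H\bigr)g(u) + K\bigl(y_{n+1} - \tfrac12 H\theta^m\bigr), \qquad K = \theta^C H^{\mathstrut\scriptstyle{\top}}\bigl(H I^+(\theta^C) H^{\mathstrut\scriptstyle{\top}} + \Gamma\bigr)^{-1},
\end{equation*}
with $I^+$ from \cref{rem:apdx-proofs:ipm:cov}; note that $\Psi_n^g$ is independent of $\xi$, as the DEnKF is deterministic.

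It then remains to verify \cref{ass:frame:def:psi} for this $\Psi$. I would split $\Psi_n^{g_1}(u_1, \theta_1, \xi) - \Psi_n^{g_2}(u_2, \theta_2, \xi)$ by the triangle inequality into three groups and bound each as in the EnKF case: (i) the $(I - \tfrac12 KH)g(u)$ part, controlled by $\normZZZZ{g_1(u_1) - g_2(u_2)}_p$ together with $\normZZZZ{K_1 - K_2}_{2p}\normZZZZ{g_2(u_2)}_{2p}$ after a \cref{pr:intro:notation:holder} split; (ii) the $K y_{n+1}$ part, bounded by $\absZZZZ{y_{n+1}}\normZZZZ{K_1 - K_2}_p$; and (iii) the genuinely new mean term $-\tfrac12 K H\theta^m$. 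For the gain I reuse $\absZZZZ{K_1} \le \absZZZZ{\theta_1^C}\absZZZZ H/\gamma_{\min}$ and the estimate $\normZZZZ{K_1 - K_2}_{2p} \le \absZZZZ H/\gamma_{\min}(1 + 2\absZZZZ{K_1}\absZZZZ H)\normZZZZ{\theta_1^C - \theta_2^C}_{2p}$ from \cite[Lemmas 3.3 and 3.4]{hoelMultilevelEnsembleKalman2016a}, noting that $\normZZZZ{\theta_1^C - \theta_2^C}_{2p} \le \normZZZZ{\theta_1 - \theta_2}_{2p}$ since the covariance is one component of the tuple.

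The new term (iii) I would handle by the splitting $K_1 H\theta_1^m - K_2 H\theta_2^m = (K_1 - K_2)H\theta_1^m + K_2 H(\theta_1^m - \theta_2^m)$. Because $\theta_1$ is deterministic in \cref{ass:frame:def:psi}, the first summand needs no Hölder step and is bounded by $\absZZZZ{\theta_1^m}\absZZZZ H\normZZZZ{K_1 - K_2}_p$, with $\absZZZZ{\theta_1^m} \le \normZZZZ{\theta_0}_r + d$ from the locality condition; the second is bounded by $\normZZZZ{K_2}_{2p}\absZZZZ H\normZZZZ{\theta_1^m - \theta_2^m}_{2p}$, with $\normZZZZ{K_2}_{2p}$ controlled through $\normZZZZ{\theta_2^C}_{2p}$ and the locality condition on $\theta_2$. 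Collecting all contributions and bounding the remaining deterministic factors with \cref{ass:frame:def:G}(ii) and the locality conditions (as in \cref{lmm:apdx-proofs:enkf}) produces the claimed constant $c_\psi$. I expect the main obstacle to be purely organizational: the augmented tuple statistic generates several extra cross terms, and one must carefully track which factors are deterministic (so that no product splitting is required) versus random, but each individual estimate is a direct transcription of the EnKF argument.
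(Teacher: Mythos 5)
Your proposal is correct and follows essentially the same route as the paper: the same rewriting of \cref{eq:ex:intro:ipm:denkf} into the framework with the tuple statistic $(E(g(\cdot)), C(g(\cdot)))$ and the gain $K=\theta^{(2)}H^{\mathstrut\scriptstyle{\top}}(HI^+(\theta^{(2)})H^{\mathstrut\scriptstyle{\top}}+\Gamma)^{-1}$, the same Kalman-gain Lipschitz bounds from \cite[Lemmas 3.3 and 3.4]{hoelMultilevelEnsembleKalman2016a}, and the same closing step via the locality conditions and \cref{ass:frame:def:G}(ii). The only (immaterial) difference is how you distribute the cross terms: you pair $K_1-K_2$ with $\theta_1^m$ and the random $K_2$ with $\theta_1^m-\theta_2^m$, whereas the paper pairs the deterministic $K_1$ with the $\theta$-difference, avoiding one H\"older split.
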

    \begin{proof}
        With $\widehat\Theta^g(\cdot) = (E(g(\cdot)), C(g(\cdot)))$, it follows from \cref{eq:ex:intro:ipm:denkf} that $\Psi_n^g(u,\theta,\xi) = g(u) + K\bigl(y_{n+1} - H/2(g(u) + \theta^{(1)})\bigr)$, with $K\coloneqq\theta^{(2)}H^{\mathstrut\scriptstyle{\top}}(HI^+(\theta^{(2)})H^{\mathstrut\scriptstyle{\top}}+\Gamma)^{-1}$. Then
        \begin{align*}
            &\normZZZZ{\Psi_n^{g_1}(u_1, \theta_1, \xi) - \Psi_n^{g_2}(u_2, \theta_2, \xi)}_p\le(1+\absZZZZ{K_1}\absZZZZ{H}/2)\normZZZZ{g_1(u_1)-g_2(u_2)}_p\\&\qquad+ \normZZZZ{K_1-K_2}_{2p}\normZZZZ{(y_{n+1}-H/2(g_2(u_2)-\theta_2^{(1)}))}_{2p} + \absZZZZ{K_1}\absZZZZ{H}/2\normZZZZ{\theta_1^{(1)}-\theta_2^{(1)}}_p.
        \end{align*}
        The proof can then be finished similarly to that of \cref{lmm:apdx-proofs:enkf}.
    \end{proof}

    Dynamics such as EKS use the square root of the covariance matrix. To prove \cref{ass:frame:def:psi}, we will need the matrix square root to be Lipschitz continuous.
    \begin{lemma} \label{lmm:apdx-proofs:ipm:sqrtm}
        From \cite[Corollary 4.2]{vanhemmenInequalityTraceIdeals1980} and \cref{pr:intro:notation:mono} follows that, if $M_1\succeq\mu I$ with $\mu>0$ and $M_2\succeq0$, then $\normZZZZ{\sqrt{M_1} - \sqrt{M_2}}_p \le \sqrt{2/\mu}\normZZZZ{M_1-M_2}_p$ for all $p\ge2$.
    \end{lemma}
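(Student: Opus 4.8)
The plan is to separate the statement into a deterministic matrix inequality, valid at each fixed realisation, and a probabilistic lifting step that promotes it to the $p$-norm. For a fixed $\omega\in\Omega$, the values $M_1(\omega)$ and $M_2(\omega)$ are ordinary symmetric matrices satisfying $M_1(\omega)\succeq\mu I$ and $M_2(\omega)\succeq0$, so the hypotheses of \cite[Corollary 4.2]{vanhemmenInequalityTraceIdeals1980} are met pointwise. First I would invoke that corollary to obtain, for every $\omega\in\Omega$, the deterministic spectral-norm bound
\begin{equation*}
    \absZZZZ{\sqrt{M_1(\omega)} - \sqrt{M_2(\omega)}} \le \sqrt{2/\mu}\,\absZZZZ{M_1(\omega) - M_2(\omega)},
\end{equation*}
in which $\absZZZZ{\cdot}$ denotes the matrix $2$-norm as fixed in \cref{sec:notation}.

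With this pointwise estimate in hand, I would set $u\coloneqq\sqrt{M_1}-\sqrt{M_2}$ and $v\coloneqq\sqrt{2/\mu}\,(M_1-M_2)$, regarded as matrix-valued RVs. By positive homogeneity of the $2$-norm, the displayed bound is exactly $\absZZZZ{u(\omega)}\le\absZZZZ{v(\omega)}$ for all $\omega\in\Omega$, which is precisely the hypothesis of \cref{pr:intro:notation:mono}. That property then gives $\normZZZZ{u}_p\le\normZZZZ{v}_p$, and since $\normZZZZ{\cdot}_p$ is positively homogeneous we have $\normZZZZ{v}_p=\sqrt{2/\mu}\,\normZZZZ{M_1-M_2}_p$. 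Combining these two relations yields the assertion of the lemma.

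Because the statement itself names both ingredients, I do not expect a genuine obstacle; the substantive content sits entirely inside the cited deterministic inequality, which I would use as a black box. The only points needing (routine) care are that the hypotheses $M_1\succeq\mu I$ and $M_2\succeq0$ hold at \emph{every} $\omega$ -- immediate, as they are structural rather than probabilistic -- and the measurability of $u$ and $v$, which holds because the matrix square root is continuous on the cone of positive semi-definite matrices and therefore composes with the measurable maps $M_1,M_2$ to yield measurable RVs. The one place where real work could hide is the van Hemmen--Ando estimate itself, and that is exactly what the citation is there to spare us.
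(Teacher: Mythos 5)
Your proposal is correct and follows exactly the route the paper intends: the lemma's statement itself names its two ingredients (the deterministic van Hemmen--Ando bound applied pointwise, then lifted to the $p$-norm via \cref{pr:intro:notation:mono} and positive homogeneity), and the paper offers no further written proof beyond that. You have simply spelled out the routine details, including the measurability remark, so there is nothing to add or correct.
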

    \begin{lemma} \label{lmm:apdx-proofs:eks}
        Non-adaptive ensemble Kalman sampling fits into the framework of \cref{sec:frame:sl-mf} and, if there exists a $\mu>0$ such that $\mathbb C[\bar u_n]\succeq \mu I$ for all $n>0$, satisfies \cref{ass:frame:def:psi}.
    \end{lemma}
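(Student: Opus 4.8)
The plan is to first rewrite the implicit update \cref{eq:ex:intro:ipm:eks} in explicit form. Collecting the $u_{n+1}^{j}$ terms gives $(I + \tau_n C(\bm u_n)\Gamma_0^{-1})u_{n+1}^{j} = u_n^j + \tau_n C(\bm u_n, \mathcal G(\bm u_n))\Gamma^{-1}(y - \mathcal G(u_n^j)) + \sqrt{2\tau_n C(\bm u_n)}\,\xi_n^j$, so I would take the statistic $\widehat\Theta^g(\cdot) = (C(\cdot), C(\cdot, g(\cdot)))$ (whose \cref{ass:frame:def:theta} is supplied by the covariance lemma above) and the update map
\[
\Psi_n^g(u, \theta, \xi) = \bigl(I + \tau_n I^+(\theta^{(1)})\Gamma_0^{-1}\bigr)^{-1}\Bigl(u + \tau_n \theta^{(2)}\Gamma^{-1}(y - g(u)) + \sqrt{2\tau_n I^+(\theta^{(1)})}\,\xi\Bigr),
\]
with $\theta = (\theta^{(1)}, \theta^{(2)})$ and the operator $I^+$ of \cref{rem:apdx-proofs:ipm:cov} inserted where positive semi-definiteness is needed. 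Writing $A_i = (I + \tau_n I^+(\theta_i^{(1)})\Gamma_0^{-1})^{-1}$, I would first record that, since $I^+(\theta_i^{(1)})\succeq0$ and $\Gamma_0\succ0$, the matrix $I + \tau_n I^+(\theta_i^{(1)})\Gamma_0^{-1}$ is similar (via $\Gamma_0^{-1/2}$) to $I + \tau_n\Gamma_0^{-1/2}I^+(\theta_i^{(1)})\Gamma_0^{-1/2}\succeq I$; hence $A_i$ is well defined (so $\Psi_n^g$ is too) and $\absZZZZ{A_i}$ is bounded \emph{pointwise} by a constant depending only on $\Gamma_0$.

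Next I would split $\Psi_n^{g_1}(u_1,\theta_1,\xi) - \Psi_n^{g_2}(u_2,\theta_2,\xi) = A_1(B_1 - B_2) + (A_1 - A_2)B_2$, where $B_i = u_i + \tau_n\theta_i^{(2)}\Gamma^{-1}(y - g_i(u_i)) + \sqrt{2\tau_n I^+(\theta_i^{(1)})}\,\xi$. For the first summand the pointwise bound on $\absZZZZ{A_1}$ reduces matters to estimating $\normZZZZ{B_1 - B_2}_p$. The $u$-part contributes $\normZZZZ{u_1-u_2}_p$ directly, and the drift part is treated as in \cref{lmm:apdx-proofs:enkf}: one adds and subtracts $\theta_1^{(2)}\Gamma^{-1}(y-g_2(u_2))$, uses that $\theta_1$ (hence $\theta_1^{(2)}$) is deterministic and bounded by the locality condition, and applies \cref{pr:intro:notation:holder} together with $\normZZZZ{g_2(u_2)}_{r}\le c_{g,2}$ from \cref{ass:frame:def:G}(ii), producing $\normZZZZ{g_1(u_1)-g_2(u_2)}_r$ and $\normZZZZ{\theta_1-\theta_2}_r$ contributions.

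The crux is the diffusion term $\sqrt{2\tau_n}\,(\sqrt{I^+(\theta_1^{(1)})} - \sqrt{I^+(\theta_2^{(1)})})\xi$, which is where the hypothesis $\mathbb C[\bar u_n]\succeq\mu I$ enters. In the inductions of \cref{sec:proof-ml,sec:proof-sl} the base point is $\theta_0 = \Theta^{\mathcal G}[\bar u_n]$, so its covariance component $\theta_0^{(1)} = \mathbb C[\bar u_n]\succeq\mu I$ is deterministic; since $\theta_1$ is deterministic with $\absZZZZ{\theta_1-\theta_0}\le d$, choosing the locality radius $d\le\mu/2$ forces $I^+(\theta_1^{(1)}) = \theta_1^{(1)}\succeq(\mu/2)I$, while $I^+(\theta_2^{(1)})\succeq0$ holds by construction. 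This is exactly the hypothesis of \cref{lmm:apdx-proofs:ipm:sqrtm}, which bounds the square-root difference in $2p$-norm by $\sqrt{4/\mu}\,\normZZZZ{I^+(\theta_1^{(1)}) - I^+(\theta_2^{(1)})}_{2p}$; because $I^+$ is the (non-expansive) projection onto the positive semi-definite cone, this is $\lesssim\normZZZZ{\theta_1-\theta_2}_{2p}$, and a final \cref{pr:intro:notation:holder} step against the Gaussian moments of $\xi$ closes the term.

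Finally, for $(A_1 - A_2)B_2$ I would use the resolvent identity $A_1 - A_2 = \tau_n A_1\bigl(I^+(\theta_2^{(1)}) - I^+(\theta_1^{(1)})\bigr)\Gamma_0^{-1}A_2$, so that the pointwise bounds on $\absZZZZ{A_1}$, $\absZZZZ{A_2}$ and the non-expansiveness of $I^+$ give $\absZZZZ{A_1-A_2}\lesssim\absZZZZ{\theta_1^{(1)}-\theta_2^{(1)}}$ pointwise; combined with \cref{pr:intro:notation:holder} and a routine bound $\normZZZZ{B_2}_{2p}\lesssim1$ (from the locality conditions, \cref{ass:frame:def:G}(ii), and Gaussian moments) this yields a $\normZZZZ{\theta_1-\theta_2}_r$ contribution. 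Collecting the pieces and taking $r$ large enough to absorb the $2p$- and $4p$-norms generated by \cref{pr:intro:notation:holder} establishes \cref{ass:frame:def:psi}. I expect the positivity bookkeeping around the matrix square root — deducing $I^+(\theta_1^{(1)})\succeq(\mu/2)I$ from $\mathbb C[\bar u_n]\succeq\mu I$ and the locality radius — to be the one genuinely delicate step, everything else paralleling \cref{lmm:apdx-proofs:enkf,lmm:apdx-proofs:denkf}.
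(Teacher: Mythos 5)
Your proof is correct and follows essentially the same route as the paper's: the same explicit rewriting of the implicit EKS update with $\widehat\Theta^g(\cdot) = (C(\cdot), C(\cdot,g(\cdot)))$, the same decomposition into a bounded-resolvent-times-difference term, a resolvent-difference term controlled by $\absZZZZ{(I+A_1)^{-1}-(I+A_2)^{-1}}\le\absZZZZ{A_1-A_2}$, and the covariance-square-root term handled via \cref{lmm:apdx-proofs:ipm:sqrtm} together with a strict lower bound on $\theta_1^{(1)}$ inherited from $\mathbb C[\bar u_n]\succeq\mu I$ (your locality-radius argument $d\le\mu/2$ is in fact a slightly more careful verification of \cref{ass:frame:def:psi} as literally stated than the paper's appeal to ``$\theta_1$ is always the mean-field parameter''). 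The one imprecise clause is that $I^+$ is not non-expansive in the spectral norm for arbitrary pairs; what saves you is that $\theta_1^{(1)}$ is positive semi-definite, so $\normZZZZ{I^+(\theta_1^{(1)})-I^+(\theta_2^{(1)})}_{2p}\le 2\normZZZZ{\theta_1^{(1)}-\theta_2^{(1)}}_{2p}$ follows from the triangle inequality and the fact that $I^+(M)$ is a nearest positive semi-definite matrix to $M$ --- exactly the mechanism the paper imports from \cite[Lemma 3.3]{hoelMultilevelEnsembleKalman2016a}.
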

    \begin{proof}
        With $\widehat\Theta^g(\cdot) = (C(\cdot), C(\cdot, g(\cdot)))$, \cref{eq:ex:intro:ipm:eks} can be manipulated into
        \begin{equation}
            \allowdisplaybreaks
            \Psi_n^g(u, \theta, \xi) = (I + \tau_n\theta^{(1)}\Gamma_0^{-1})^{-1}\bigl[u + \tau_n\theta^{(2)}\Gamma^{-1}(y - g(u))\bigr] + \sqrt{\rule{0pt}{1.75ex}2\tau_n\smash[t]{I^+(\theta^{(1)})}}\kern2pt\xi.
        \end{equation}
        Then follows (as $\smash[t]{\normZZZZ{\theta_2^{(1)}-I^+(\theta_2^{(1)})}_p\le\normZZZZ{\theta_2^{(1)} - \theta_1^{(1)}}_p}$ similarly to \cite[Lemma 3.3]{hoelMultilevelEnsembleKalman2016a}):\pagebreak{}
        \begin{align*}
            \allowdisplaybreaks
            &\normZZZZ{\Psi_n^{g_1}(u_1, \theta_1, \xi) - \Psi_n^{g_2}(u_2, \theta_2, \xi)}_p \le \normZZZZ{\sqrt{2\tau_n}\kern2pt\xi}_{2p}\,\normZZZZ{\sqrt{\rule{0pt}{1.9ex}\smash[tb]{\theta_1^{(1)}}} - \sqrt{\rule{0pt}{1.9ex}\smash[tb]{I^+(\theta_2^{(1)})}}}_{2p}\\
            &\qquad+\normZZZZ{(I+\tau_n\theta_2^{(1)}\Gamma_0^{-1})^{-1}[u_1-u_2 + \tau_n\theta_1^{(2)}\Gamma^{-1}(y-g_1(u_2))-\tau_n\theta_2^{(2)}\Gamma^{-1}(y-g_2(u_2))]}_p\\
            &\qquad+\normZZZZ{((I+\tau_n\theta_1^{(1)}\Gamma_0^{-1})^{-1} - (I + \tau_n\theta_2^{(1)}\Gamma_0^{-1})^{-1})[u_1+\tau_n\theta_1^{(2)}\Gamma^{-1}(y-g_1(u_1))]}_p\\
            &\quad\le2\sqrt{2\tau_n/\mu}\normZZZZ{\xi}_{2p}\normZZZZ{\theta_1^{(1)}-\theta_2^{(1)}}_{2p}\\
            &\qquad + \normZZZZ{u_1-u_2}_p + \tau_n\absZZZZ{\theta_1^{(2)}\Gamma^{-1}}\,\normZZZZ{g_1(u_1)-g_2(u_2)}_p+\tau_n\absZZZZ{\Gamma^{-1}}\normZZZZ{(y-g_2(u_2))}_{2p}\normZZZZ{\theta_1^{(2)}-\theta_2^{(2)}}_{2p}\\
            &\qquad + \normZZZZ{u_1+\tau_n\theta_1^{(2)}\Gamma^{-1}(y-g_1(u_1))}_{2p}\tau_n\absZZZZ{\Gamma_0^{-1}}\normZZZZ{\theta_1^{(1)}-\theta_2^{(1)}}_{2p}.
        \end{align*}
        The last inequality bounded $\absZZZZ{(I+\tau_n\theta_2^1\Gamma_0^{-1})^{-1}}\le1$ and used the Lipschitz inequality that, for positive semi-definite matrices $A_i$, it holds that $\absZZZZ{(I+A_1)^{-1}-(I+A_2)^{-1}} = \absZZZZ{(I+A_1)^{-1}(A_2-A_1)(I+A_2)^{-1}} \le \absZZZZ{A_1-A_2}$. We were also able to use \cref{lmm:apdx-proofs:ipm:sqrtm} since, in \cref{sec:proof-sl,sec:proof-ml}, $\theta_1$ is always the mean-field parameter $\Theta^{\mathcal G}[\bar u_n]$.
    \end{proof}

\section*{Acknowledgments}
    We thank Ignace Bossuyt and Pieter Vanmechelen for their thorough reviews and helpful comments. Part of this work was financed by the Fonds Wetenschappelijk Onderzoek -- Vlaanderen (FWO) under grants 1169725N and 1SE1225N, and by the \mbox{European} High-Performance Computing Joint Undertaking (JU) under grant agreement No.\ 955701 (TIME-X). The JU receives support from the European Union's Horizon 2020 research and innovation programme and from Belgium, France, Germany, and Switzerland.

\bibliographystyle{abbrv}
\bibliography{references}

\end{document}